\title{Selective Hypergraph Colourings}
\date{}
\begin{document}
\newtheorem{theorem}{Theorem}[section]
\newtheorem{definition}{Definition}[section]
\newtheorem{proposition}[theorem]{Proposition}
\newtheorem{corollary}[theorem]{Corollary}
\newtheorem{lemma}[theorem]{Lemma}
\newcommand*\cartprod{\mbox{ } \Box \mbox{ }}
\newtheoremstyle{break}
  {}
  {}
  {\itshape}
  {}
  {\bfseries}
  {.}
  {\newline}
  {}

\theoremstyle{break}

\newtheorem{propskip}[theorem]{Proposition}
\DeclareGraphicsExtensions{.pdf,.png,.jpg}
\author{Yair Caro \\ Department of Mathematics\\ University of Haifa-Oranim \\ Israel \and Josef  Lauri\\ Department of Mathematics \\ University of Malta
\\ Malta \and Christina Zarb \\Department of Mathematics \\University of Malta \\Malta }

\maketitle

\begin{abstract}
We look at colourings of $r$-uniform hypergraphs, focusing our attention on unique colourability and gaps in the chromatic spectrum.  The pattern of an edge $E$ in an $r$-uniform hypergraph $H$ whose vertices are coloured is the partition of $r$ induced by the colour classes of the vertices in $E$.  Let $Q$ be a set of partitions of $r$.  A $Q$-colouring of $H$ is a colouring of its vertices such that only patterns appearing in $Q$ are allowed.  We first show that many known hypergraph colouring problems, including Ramsey theory, can be stated in the language of $Q$-colourings.  Then, using as our main tools the notions of $Q$-colourings and $\Sigma$-hypergraphs, we define and prove a result on tight colourings, which is a strengthening of the notion of unique colourability.  $\Sigma$-hypergraphs are a natural generalisation of $\sigma$-hypergraphs introduced by the first two authors in an earlier paper.  We also show that there exist $\Sigma$-hypergraphs with arbitrarily large $Q$-chromatic number and chromatic number but with bounded clique number.  Dvorak et al. have characterised those $Q$ which can lead to a hypergraph with a gap in its $Q$-spectrum.  We give a short direct proof of the necessity of their condition on $Q$.  We also prove a partial converse for the special case of $\Sigma$-hypergraphs.  Finally, we show that, for at least one family $Q$ which is known to yield hypergraphs with gaps, there exist no $\Sigma$-hypergraphs with gaps in their $Q$-spectrum.
\end{abstract}
\section{Introduction}

Let $V=\{v_1,v_2,...,v_n\}$ be a finite set, and let $E=\{E_1,E_2,...,E_m\}$ be a family of subsets of $V$.  The pair $H=(V,E)$ is called a \emph{hypergraph} with vertex- set $V(H)=V$, and with edge-set $E(H)=E$.  When all the subsets are of the same size $r$, we say that $H$ is an \emph{r-uniform hypergraph}. 

Hypergraph vertex colourings is a widely studied topic, and several different types of colourings have been defined, such as classical colourings, where monochromatic edges are not allowed \cite{berge1973graphs}, Voloshin colourings \cite{voloshin02}, non monochromatic non-rainbow (NMNR) colourings \cite{CaroLauri14} and constrained colouring \cite{bujtastuz09,ClZ1}.  In the latter three, one of the most interesting phenomenons studied is the existence of a \emph{gap} in the \emph{chromatic spectrum} of the hypergraph, which do not occur in classical colourings of neither graphs nor hypergraphs.  The \emph{chromatic spectrum} of a hypergraph $H$  is the sequence, in increasing order, of all $k$ such that $H$ has a $k$-colouring which satistifes the type of colouring being considered. We say that the chromatic spectrum has a \emph{gap} when there exist integers $k_1<k_2<k_3$ such that the hypergraph is $k_1$- and $k_3$-colourable but not $k_2$-colourable. 

In \cite{CaroLauri14} and \cite{ClZ1}, the authors have studied the existence, or not, of gaps in the chromatic spectrum for NMNR colourings and constrained colourings in particular, using a construction defined in \cite{CaroLauri14}, the $\sigma$-hypergraph.  A $ \sigma$-hypergraph $ H(n,r,q$ $\mid$ $\sigma$), where $\sigma$ is a partition of $r$,  is an r-uniform hypergraph having $nq$ vertices partitioned into $ n$ \emph{classes} of $q$ vertices each.  If the classes are denoted by $V_1$, $V_2$,...,$V_n$, then a subset $K$ of $V(H)$ of size $r$ is an edge if the partition of $r$ formed by the non-zero cardinalities $ \mid$ $K$ $\cap$ $V_i$ $\mid$, $ 1 \leq i \leq n$, is $\sigma$. The non-empty intersections $K$ $\cap$ $V_i$ are called the parts of $K$.  The number of parts of the partition $\sigma$ is denoted by $s(\sigma)$, while the size of the largest and smallest parts of the partition $\sigma$ are denoted by $\Delta=\Delta(\sigma)$ and $\delta=\delta(\sigma)$, respectively.  Several interesting results emerged about the chromatic spectrum of $\sigma$-hypergraphs in conjecunction with these two types of colourings

For the purpose of this paper we now extend this definition of a $\sigma$-hypergraph to the more general notion of a $\Sigma$-hypergraph.  Let $P(r)$ denote the set of all partitions of $r$. A $\Sigma$-hypergraph $ H(n,r,q$ $\mid$ $\Sigma$), where $\Sigma \subseteq P(r)$,  is an $r$-uniform hypergraph having $nq$ vertices partitioned into $ n$ \emph{classes} of $q$ vertices each.  If the classes are denoted by $V_1$, $V_2$,...,$V_n$, then a subset $K$ of $V(H)$ of size $r$ is an edge  if the partition $\sigma$ of $r$ formed by the non-zero cardinalities $ \mid$ $K$ $\cap$ $V_i$ $\mid$, $ 1 \leq i \leq n$, is in $\Sigma$.  We call $\sigma$ the \emph{edge type} of the edge $E$, denoted by $type(E)$, and $\Sigma$ is the set of allowable edge-types in $H(n,r,q | \Sigma)$.

In this paper we describe \emph{$L$-colourings} and \emph{$Q$-colourings} of $r$-uniform hypergraphs, which can encompass many different types of hypergraph colourings described in the literature.  Such ideas, originated in Voloshin's seminal work in \cite{jiang2002chromatic}, were more explicit in \cite{Milici},  in the context of the colouring of Steiner systems for  $r=3$ and $r=4$, and further studied in \cite{griggs2008some,quattrocchicol}, and studied in much more generality for oriented graphs in \cite{2010pattern}.  We define $L$- and $Q$-colourings as follows.

Let $H$ be an $r$-uniform  hypergraph, $ r \geq 2$ and consider $E \in E(H)$. Then a colouring of the vertices of $E$ induces a partition  $\pi$ of $r$ whose parts are the numbers of vertices of each colour appearing in $E$.  This partition is called the \emph{colour pattern} of $E$ and is written as $pat(E)  = (n_1,n_2,\ldots,n_k)$, where $n_1 \geq n_2 \geq \ldots \geq n_k \geq 1$, where $\sum_{i=1}^{k} n_i = r$.

For any  edge $E \in E(H)$, we assign $Q(E) \subseteq P(r)$.  We say that  a colouring of the vertices of $H$ is an \emph{$L$-colouring}, where $L=\{Q(E_i): i=1, \ldots, |E(H)|\}$, if $\forall E_i \in E(H)$, $pat(E_i) \in Q(E_i)$.  In the case when all the edges are assigned the same family of partitions $Q$, that is $Q(E_i)=Q$, $\forall E_i \in E(H)$,we call this a $Q$-colouring. We define $\Delta(Q)= \max \{ \Delta(\pi): \pi \in Q \}$ and let $s(Q)= \max \{ s(\pi): \pi \in Q\}$. 

The \emph{$Q$-spectrum} of $H$ is the sequence, in increasing order, of all $k$ such that $H$ has a $k$-$Q$-colouring.   The \emph{lower chromatic number} $\chi_{Q}$ is defined as the least number $k$ for which $H$ has a $k$-$Q$-colouring.  Similarly, the \emph{upper chromatic number} $\overline{\chi}_Q$ is the largest $k$ for which $H$ has a $k$-$Q$-colouring.   Clearly, the first and last terms of this sequence are $\chi_Q$ and $\overline{\chi}_Q$ respectively.  As described previously, we say that the $Q$-spectrum has a \emph{gap}, or is \emph{broken}, when there exist integers $k_1<k_2<k_3$ such that the hypergraph is $k_1$-$Q$- and $k_3$-$Q$-colourable but not $k_2$-$Q$-colourable.  We denote the $Q$-spectrum of a hypergraph $H$ by $Spec_Q(H)$ . 

This paper is structured as follows.  We first use the ``language" of $L$- and $Q$-colourings to describe the different types of colourings covered by this concept. In Section 2 we introduce, motivated by the notion of uniquely-colourable graphs and hypergraphs \cite {Tuza2002221}, the concept of \emph{tight Q-colourings} of hypergraphs and give some interesting results involving $\Sigma$-hypergraphs.   In Section 3, we then consider  the clique number of $\Sigma$-hypergraphs  and present sparse $r$-uniform $\Sigma$-hypergraphs with bounded clique number and arbitrary large chromatic number ,extending results from\cite{ClZ2}.  Finally, in Section 4 we concentrate on conditions on $Q$ for the existence and non-existence of gaps  in the $Q$-spectrum of $r$-uniform hypergraphs.  These conditions have been completely characterised in \cite{2010pattern} for oriented hypergraphs, with the results for non-oriented hypergraphs given as a corollary in Theorem 20  in \cite{2010pattern}. Here we show that, in many cases, these conditions also guarantee the existence of a $\Sigma$-hypergraph with a broken $Q$-spectrum.  But, we also show that $\Sigma$-hypergraphs cannot always provide examples with a broken $Q$-spectrum for all cases of $Q$ for which it is known that a uniform hypergraph with a gap in its $Q$-spectrum does indeed exist.

Several types of colourings have been defined for hypergraphs.  Here we look at the main types which have been studied, and describe them in terms of $L$- and $Q$-colourings.  We also look at extermal colouring problems such as Ramsey Theory, and describe them also in terms of $Q$-colourings.

Two particularly important partitions of $r$ will be used several times --- the monochromatic partition $(r)$ and the rainbow partition $(1,1, \ldots, 1)$.  We use $M$ and $R$ to represent these partitions respectively.

\begin{enumerate}
\item{For classical  graph colourings, $Q =\{\pi \in P(r): \pi=R\}$.}
\item{For, classical colourings of hypergraphs,   $Q = \{\pi \in P( r): \pi \not = M \}$.  The chromatic number $\chi(H)$ is the smallest integer $k$ for whicha proper classical $k$-colouring of $H$ exists.  For any integer $p \geq \chi(H)$, there exists a proper classical $p$-colouring.}
\item{ In Voloshin colourings of hypergraphs \cite{voloshin02}, or mixed hypergraphs, there exist two types of edges, $D$-edges and $C$-edges.  A $D$-edge cannot be \emph{monochromatic}, that is all vertices of the edge having the same colour, while a $C$-edge cannot by \emph{polychromatic(rainbow)}, that is all vertices having a different colour.  Hence for all  $D$-edges, $Q_D=Q(E)=\{\pi \in P( r): \pi \not =M\}$, while for all $C$-edges, $Q_C=Q(E)=\{\pi \in P( r): \pi \not =R\}$, and $L=\{Q_C \cup Q_D\}$}
\item{A non-monochromatic non-rainbow (NMNR) colouring, as discussed in \cite{CaroLauri14},  is a $Q$-colouring where $Q= \{\pi \in P( r): \pi \not \in \{ M,R \}\}$.  Such hypergraphs are often referred to as \emph{bi-hypergraphs} and are a special type of Voloshin colourings.}
\item{An  $(\alpha,\beta)$-colouring of a hypergraph $H$, as described in \cite{Clz1}, is the case where $Q  = \{\pi \in  P(r):  \pi= (n_1 ,n_2 ,\ldots ,n_k ),  \alpha \leq k \leq \beta\}$ .  This is based on the concept of \emph{colour-bounded hypergraphs} first defined by Bujtas and Tuza in \cite{bujtastuz09}.  Observe that classical hypergraph colourings are $(2,r)$-colourings, while NMNR-colourings are $(2,r-1)$-colourings.}
\item{Bujtas and Tuza defined another type of hypergraph colouring with further restrictions in \cite{bujtas2010color}:  a \emph{stably bounded hypergraph} is a hypergraph together with four colour-bound functions which express restrictions on
vertex colorings.  Formally,  an $r$-uniform stably bounded hypergraph is a six-tuple $H = (V(H),E(H), s, t, a, b)$, where $s$,$t$, $a$ and $b$ are positive integers,  called \emph{colour-bound functions}. We assume throughout that  $1 \leq s \leq t \leq r$ and $1 \leq a \leq b \leq r$ hold.  A proper vertex coloring of $H = (V(H),E(H), s, t, a, b)$   satisfies the following three conditions for every edge $E \in E(H)$.
\begin{itemize}
\item{ The number of different colors assigned to the vertices of $E$ is at least $s$ and at most $t$.}
\item{There exists a colour assigned to at least $a$ vertices of $E$.}
\item{Each color is assigned to at most $b$ vertices of $E$.}
\end{itemize}

Hence, such a colouring is a $Q$-colouring where  \[Q  = \{\pi \in  P(r): \pi= (n_1 ,n_2 ,\ldots ,n_k ), n_1 \geq n_2 \geq \ldots \geq n_k,   s \leq k \leq t,a \leq  n_1  \leq b .\}\]}
\item{In \cite{axerol14}, given a vertex coloring of a hypergraph, a vertex contained in an edge $E$ is said to be uniquely colored in $E$, if its color is assigned to no other vertex in $E$. If every edge of a hypergraph contains a uniquely colored vertex, then the coloring is called \emph{conflict-free}  In this case, $Q  = \{\pi \in  P(r): \pi= (n_1 ,n_2 ,\ldots ,n_k ), n_1 \geq n_2 \geq \ldots \geq n_k=1.\}$}
\item{We can also use the language of $Q$-colourings to express Ramsey's Theorem, which broadly states that one will find monochromatic cliques in any edge colouring of a sufficiently large complete graph. The theorem can also be extended to hypergraphs:  for fixed integers $r$ and $k$, and integers $n_1\ldots,n_k \geq r$, there exists an integer $R(r,n_1, \ldots, n_k)$ such that if the edges of a complete $r$-uniform hypergraph on $n$ vertices,  $n \geq R(r,n_1, \ldots, n_k)$ are coloured using $k$  colours, then for some $1 \leq j \leq k$, there is a monochromatic complete $r$-uniform subhypergraph of order $n_j$ and colour $j$.

Let us consider $K(n,r)$ to be a complete $r$-uniform hypergraph on $n$ vertices.  We define the hypergraph $H=H(n,r,p)$, where $p \geq r+1$, such that the vertices of $H$ are the edges of $K(n,r)$, so that $|V(H)|=\binom{n}{r}$. Now a collection of $\binom{p}{r}$ vertices in $H$ is an edge if the corresponding edges in $K(n,r)$ are all taken from a set of $p$ vertices in $K(n,r)$.  So $|E(H)|= \binom{n}{p}$ and $H$ is  $\binom{p}{r}$-uniform.  We use this to express Ramsey's Theorem as follows:

\begin{theorem}
\noindent For fixed $k,r,p$, there exists $N(k,r,p)$ such that for $n \geq N(k,r,p)$, if $Q \subseteq P(\binom{p}{r})$ and $M \not \in Q$, then $H=H(n,r,p)$ is not $k$-$Q$-colourable.
\end{theorem}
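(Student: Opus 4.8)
The plan is to recognise this statement as nothing more than a translation of the (hyper)graph Ramsey theorem into the $Q$-colouring language, so the real content is setting up the correspondence carefully and then quoting Ramsey with the right parameters.

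First I would make the dictionary explicit. By construction the vertices of $H=H(n,r,p)$ are the edges of $K(n,r)$, so a colouring of $V(H)$ with $k$ colours \emph{is} a $k$-colouring of the edge set of $K(n,r)$. The edges of $H$ are naturally indexed by the $p$-subsets $P\subseteq V(K(n,r))$: the edge $E_P$ consists of those vertices of $H$ (that is, those $r$-subsets of $V(K(n,r))$) contained in $P$, and there are exactly $\binom{p}{r}$ of them, which is why $H$ is $\binom{p}{r}$-uniform. The key observation is then that, for a $k$-colouring $c$ of $E(K(n,r))$ and a $p$-set $P$, the pattern $pat(E_P)$ equals the monochromatic partition $M$ of $\binom{p}{r}$ if and only if all $\binom{p}{r}$ $r$-subsets of $P$ receive the same colour under $c$, i.e.\ if and only if $P$ spans a monochromatic copy of $K(p,r)$. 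Hence, since $M\notin Q$, any $Q$-colouring of $H$ yields in particular a $k$-edge-colouring of $K(n,r)$ containing no monochromatic $K(p,r)$; note that only the ``$M$ is forbidden'' part of the hypothesis on $Q$ is used, the remaining patterns in $Q$ being irrelevant.

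Next I would invoke the hypergraph Ramsey theorem exactly as stated above, taking $n_1=\cdots=n_k=p$ (the hypothesis $n_j\ge r$ holds since $p\ge r+1$), and set
\[
N(k,r,p) \;:=\; R\bigl(r,\underbrace{p,\ldots,p}_{k}\bigr).
\]
For $n\ge N(k,r,p)$, every $k$-colouring of the edges of $K(n,r)$ contains a monochromatic $K(p,r)$ in some colour; by the observation above, the corresponding colouring of $V(H)$ then has an edge $E_P$ with $pat(E_P)=M\notin Q$, so it is not a $Q$-colouring. As this applies to every colouring of $V(H)$ using $k$ colours — and a colouring with fewer colours is a special case, or one just uses monotonicity of $R$ in the number of colours — $H$ is not $k$-$Q$-colourable.

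Since the whole argument is a reformulation, there is no genuine obstacle; the one point that must be stated carefully is the equivalence between a monochromatic edge of $H$ and a monochromatic complete $r$-uniform subhypergraph $K(p,r)$ of $K(n,r)$, together with the remark that the single forbidden pattern $M$ is exactly what makes the Ramsey number the right bound.
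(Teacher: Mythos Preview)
Your proposal is correct and is exactly the argument the paper intends: the paper does not give a separate proof but simply sets up the hypergraph $H(n,r,p)$ so that a $k$-colouring of $V(H)$ is a $k$-edge-colouring of $K(n,r)$ and a monochromatic edge of $H$ is a monochromatic $K(p,r)$, whence the statement is Ramsey's theorem with $n_1=\cdots=n_k=p$. Your write-up makes explicit precisely the dictionary and the choice $N(k,r,p)=R(r,p,\ldots,p)$ that the paper leaves implicit.
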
}
\end{enumerate}

The definitions of $L$-colourings and $Q$-colourings encompasses all these different types of colourings as special cases, and allows for other types of colourings to be defined.

\section{Tight colourings}

 We say that a hypergraph $H$ is \emph{tightly $Q$-colourable} if:

\begin{enumerate}
\item{$Spec_Q(H)=\{k\}$}
\item{The $k$-$Q$-colouring is unique (up to exchanging colour classes)}
\item{The colour classes have equal size.}
\item{For any partition $\pi \in Q$, $H$ is not $(Q \backslash \pi)$-colourable.}
\end{enumerate}

Tight colouring is motivated by the concept of the so called uniquely colourable graphs \cite{gross2004handbook}.   While in a  uniquely colourable graph, uniqueness is required only when the number of colours used is exactly $\chi(G)$  (but if we can  use more than $\chi(G)$ colours, there is no restrictions on the colour class structure), in tight-$Q$ colouring of hypergraphs, the restrictions are considerably stricter.   

 An important colouring that will be used frequently is the \emph{canonical distinct monochromatic colouring} --- in short CDMC --- which  is a colouring of the $\Sigma$-hypergraph $H(n,r,q|\Sigma)$ such that all vertices belonging to the same class are given the same colour (so each class is monochromatic), and vertices of distinct classes are given distinct colours (hence distinct).  It is a simple but important fact that in a CDMC  of $H(n,r,q|\Sigma)$,  we have  $type(E ) = pat(E)$ for every edge $E \in E(H)$.

We are now ready to prove our main result on tight colouring.

\begin{theorem} \label{tight}
Suppose $Q \subset P(r)$ such that $M,R \not \in Q$.  Then the $\Sigma$-hypergraph $H=H(2r,r,(r-1)^2+1 \mid \Sigma=Q)$ (on $(2r((r-1)^2+1)$ vertices) is tightly $Q$-colourable.

\end{theorem}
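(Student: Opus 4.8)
The plan is to verify the four defining conditions of tight $Q$-colourability for $H = H(2r, r, (r-1)^2+1 \mid \Sigma = Q)$ in turn, with the spectrum condition being the heart of the matter. Throughout write $n = 2r$, $q = (r-1)^2 + 1$, and note that the CDMC of $H$ uses exactly $n = 2r$ colours and, since $type(E) = pat(E)$ for every edge under the CDMC and $\Sigma = Q$, it is a valid $2r$-$Q$-colouring with colour classes of equal size $q$. So $2r \in Spec_Q(H)$, and the candidate value of $k$ is $2r$; conditions (2), (3) and (4) will all be read off once we show $Spec_Q(H) = \{2r\}$ and that the CDMC is the only $2r$-$Q$-colouring.

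The main work is to show that \emph{every} $Q$-colouring of $H$ is the CDMC (up to renaming colours), which simultaneously pins the spectrum to $\{2r\}$. First I would argue that in any $Q$-colouring each class $V_i$ must be monochromatic. Suppose some class $V_i$ receives at least two colours. Because $q = (r-1)^2 + 1 > (r-1)^2$, by pigeonhole one colour appears on at least $r$ vertices of... no — rather, I would use that $q$ is large enough that within $V_i$ one can select $r$ vertices all of the same colour \emph{and also} select $r$ vertices realising a split; more carefully, since $\Delta(\pi) \le r-1$ for every $\pi \in Q$ (as $M \notin Q$, so no part equals $r$; in fact $\Delta(Q) \le r-1$), an edge entirely inside one class — which has pattern $(r) = M$ — is forbidden, but $q = (r-1)^2+1 \ge r$ means such an edge \emph{exists as a vertex set}; however it is only an edge of $H$ if its type lies in $\Sigma = Q$, and its type is $(r) = M \notin Q$, so it is not an edge. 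That observation alone does not force monochromatic classes, so instead I would reason on edges meeting two classes: take vertices from $V_i$ and $V_j$. The value $q = (r-1)^2 + 1$ is exactly chosen so that a class cannot be "split into at most $r-1$ colour classes each of size at most $r-1$" — if $V_i$ used $c$ colours with each colour class of size $\le r-1$ we would need $c(r-1) \ge q = (r-1)^2+1$, forcing $c \ge r$; and if some colour class inside $V_i$ has size $\ge r$ then picking $r$ of them gives a non-edge of type $M$, fine, but picking $r-1$ of them plus one vertex of another colour in $V_i$ gives a set of type $(r-1,1) \in$? — this is where the structure of $Q$ matters and I would split according to whether $(r-1,1) \in Q$. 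This case analysis, organised around the sizes of colour classes inside a fixed $V_i$ and which two-part partitions lie in $Q$, is the crux and the step I expect to be the main obstacle.

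Once every class is monochromatic, each edge $E$ has pattern equal to its type, and for a colouring to be a $Q$-colouring we need every edge-type in $\Sigma = Q$ to be consistent — which it automatically is — so the only remaining freedom is whether two \emph{different} classes $V_i, V_j$ can share a colour. If $V_i$ and $V_j$ have the same colour, then consider an edge $E$ of $H$ that meets $V_i$ in $a$ vertices and $V_j$ in $b$ vertices and other classes as needed so that $type(E) \in Q$; its pattern merges the $a$ and the $b$ into a single part of size $a+b$, producing a pattern that may fail to lie in $Q$. Using $n = 2r$ classes and $q = (r-1)^2+1 \ge r$ vertices per class, I would show that for any proper colouring with fewer than $2r$ colours some edge is forced to have a bad pattern — concretely, fix $\pi = (n_1, \dots, n_s) \in Q$ with $s = s(\pi)$ parts; the abundance of classes ($2r$ of them, and $s \le r$) lets us realise $\pi$ as an edge-type using $s$ distinct classes, and if two of the colours used on those $s$ classes coincide the pattern collapses to one with fewer than $s$ parts or with a larger maximum part, and I would check this lands outside $Q$ for a suitable choice of $\pi$ (e.g. a $\pi$ maximising $s(\pi)$, or one forcing $\Delta$ up past $r-1$). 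This establishes that the colouring must use all $2r$ colours with distinct classes, i.e. it is the CDMC; hence $Spec_Q(H) = \{2r\}$ (condition 1), the colouring is unique up to permuting colours (condition 2), the classes have equal size $q$ (condition 3), and for condition (4), removing any single $\pi$ from $Q$ either destroys the CDMC's validity (if $\pi$ is used as an edge-type, and every $\pi \in Q = \Sigma$ is the type of some edge whenever $H$ has such an edge — which holds for $q \ge \Delta(Q)$ and $n \ge s(Q)$, both guaranteed here) making $H$ not $(Q\setminus\pi)$-colourable at all, so condition (4) follows immediately.
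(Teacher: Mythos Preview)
Your overall skeleton --- the CDMC is a valid $2r$-$Q$-colouring with equal colour classes, uniqueness of the CDMC pins the spectrum to $\{2r\}$ and gives conditions (1)--(3), and condition (4) follows from uniqueness --- is correct. In fact your remark for condition (4) is cleaner than the paper's own treatment: since $Q\setminus\pi \subset Q$, any $(Q\setminus\pi)$-colouring is automatically a $Q$-colouring, hence must be the CDMC; but the CDMC has an edge of type $\pi$ and hence pattern $\pi\notin Q\setminus\pi$, contradiction. The paper instead redoes the whole uniqueness argument for $Q^* = Q\setminus\pi$, which is unnecessary.

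The genuine gap is in the uniqueness step itself, which you correctly flag as ``the main obstacle'' but do not resolve. Your proposed approach --- analyse a single class $V_i$, look at edges meeting one or two classes, and case-split on whether $(r-1,1)\in Q$ --- does not work for arbitrary $Q$. The problem is that an edge of $H$ exists only if its type lies in $\Sigma=Q$, and you have no control over which two-part partitions $Q$ contains (indeed $Q$ might contain none); so a local argument at one class cannot by itself manufacture a forbidden pattern. The paper's argument is more global and hinges on two ideas you are missing. First, it uses $R\notin Q$ to bound the number of classes having $\ge r$ colours: if $r$ classes each show $r$ colours, one can assemble a rainbow edge of any prescribed type in $\Sigma$, which is forbidden; hence at least $r+1$ classes have at most $r-1$ colours, and since $q=(r-1)^2+1$ each of these has a dominant colour appearing at least $r$ times. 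Second, rather than case-splitting on membership of specific partitions, the paper works with the two extremal partitions that always exist in $Q$: one $\sigma$ with $\Delta(\sigma)=\Delta(Q)$ (used to show the dominant colours $c_1,\dots,c_{r+1}$ are pairwise distinct and appear in no other class, since otherwise an edge of type $\sigma$ can be built with a colour-part of size $\Delta(Q)+1$), and one $\sigma$ with $s(\sigma)=s(Q)$ (used to show first $V_1,\dots,V_{r+1}$ and then all $V_j$ are monochromatic, since a non-monochromatic class lets one build an edge of type $\sigma$ whose pattern has $s(Q)+1$ parts). These extremal choices give the required contradiction uniformly in $Q$, avoiding your case analysis entirely.
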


\begin{proof}

Consider $H=H(2r,r,(r-1)^2+1 \mid \Sigma=Q)$.  Clearly $H$ is $2r$-$Q$-colourable by the CMDC, with colour classes of equal size.

We first show that $Spec_Q(H)=\{2r\}$, and that $H$ is uniquely $2r$-$Q$-colourable.

Consider any $k$-$Q$-colouring of $H$.  Let $n_j$ be the number of distinct colours appearing in  class $V_j$, for $j=1 \ldots 2r$.  Let us consider those classes for which $n_j \geq r$.  If there are at least $r$ such classes, then we can choose any edge $E$ of $H$ with $type(E )  = \sigma \in \Sigma $ to include $r$ distinct colours, so that $pat(E ) = R \not \in Q$.

So there are at most $r-1$ such classes, and hence at least $r+1$ classes in which $n_j \leq r-1$.  Let these classes be $V_1$ to $V_{r+1}$.  Since $q=(r-1)^2+1$, it follows by the pigeon-hole principle that in each of the classes $V_1$ to $V_{r+1}$ there is a colour that appears at least $r$ times.  Let $c_j$ be the colour which appears most frequently in class $V_j$ for $j=1 \ldots r+1$.  

We show that the colours $c_1,\ldots c_{r+1}$ must all be distinct and cannot appear in any other class.  Suppose that colour $c_j$ appears in another class among $V_1$ to $V_{2r}$, say in class $V_t$.  Let $\sigma$ be a partition such that $\Delta(\sigma)=\Delta(Q)$.  Since $M \not \in Q$, $\Delta(\sigma)=\Delta(Q) <r$.  Let $E$ be an edge with $type(E) =\sigma$.  Let us choose the vertices of the $\Delta$-part of $E$ from the class $V_j$ all of colour $c_j$, and we choose another part of  $E$  from class $V_t$ to include the vertex with colour $c_j$, with the remaining parts chosen from any of the remaining classes.  Then $pat(E)$ has $\Delta+1$ vertices of the same colour, which is impossible by the maximality of $\Delta(Q)$.  Hence $c_1,\ldots,c_{r+1}$ are all distinct and the colour $c_j$ only appears in class $V_j$ for $j=1,\ldots,r+1$.  

We now show that in fact, classes $V_1$ to $V_{r+1}$ must be monochromatic of distinct colours.  Suppose some class $V_j$, $1 \leq j \leq r+1$, has two vertices $x$ and $y$ of distinct colours, one of which, say $x$, has colour $c_j$.  Let $\sigma$ be a paritition such that $s(\sigma)=s(Q)$.  Since $R \not \in Q$, $s(\sigma)=s(Q) =s \leq r-1$, and hence $\Delta(\sigma) \geq 2$.  Also $s \geq 2$ since $M \not \in Q$.  We now take an edge $E$ with $type(E)=\sigma$ such that the $\Delta$ part of $E$ is chosen from $V_j$ to include $x$ and $y$.  In the remaining classes from $V_1$ to $V_{r+1}$, there are at least another $r-1$ distinct colours among the $c_1$ to $c_{r+1}$.  So we can choose the remaining parts of $\sigma$  with a distinct colour for each differnt to the colours of $x$ and $y$.  But then this edge includes $s(Q)+1$ colours which is not possible by the maximality of $s(Q)$.

Finally, we show that all the class $V_1$ to $V_{2r}$ must be monochromatic of distinct colours.  Suppose $V_j$ for $j > r+1$, has two vertices, $x$ and $y$ of distinct colours.  Observe that these colours cannot appear in classes $V_1$ to $V_{r+1}$ as shown above.  Let $\sigma$ again be a partition such that $s(\sigma)=s(Q)$.  Recall that, since $R \not \in Q$, $s(\sigma)=s(Q) =s \leq r-1$, and hence $\Delta(\sigma) \geq 2$, and also $s \geq 2$ since $M \not \in Q$.  We take an edge $E$ with $type(E)=\sigma$ such that the $\Delta$-part is taken from $V_j$ to include $x$ and $y$, and the remaining parts are chosen from the classes $V_1$ to $V_{s-1}$.  Again $E$ has a colour pattern with at least $s(Q)+1$ distinct colours, a contradiction.  Hence all classes must be monochromatic of distinct colour, and, up to permutation of classes,  the CMDC is the only possible colouring of $H$.

\medskip

We now show that for any partition $\pi \in Q$, $H$ is not $(Q \backslash \pi)$-colourable.  Clearly if $|Q|=1$, the result is trivial, so we may assume that $|Q| \geq 2$.  Now consider $\pi \in Q$ and $Q^* = Q \backslash \pi$.  Clearly the CMDC is not a valid $(2r)$-$Q^*$-colouring of $H$, since if $E \in E(H)$ has $type(E)=\pi$, then also $pat(E)=\pi \in \Sigma=Q$, but $pat(E)=\pi \not \in Q^*$.

So let us assume any $Q^*$-colouring of $H$ which is not the CDMC.  Once again let $n_j$ be the number of colours appearing in $V_j$ for $j=1 \ldots 2r$.  If there are at least $r$ classes for which $n_j \geq r$, then we can choose any edge $E$ of $H$ with $type(E)=\sigma \in \Sigma$ to include $r$ distinct colours, so that $pat(E)=R \not \in Q$ and hence not in $Q^*$.  As before, there are at most $r-1$ such classes, and hence at least $r+1$ classes in which $n_j \leq r-1$.  Let these classes be $V_1$ to $V_{r+1}$.  Since $q=(r-1)^2+1$, it follows by the pigeon-hole principle that in each of the classes $V_1$ to $V_{r+1}$ there is a colour that appears at least $r$ times.  Let $c_j$ be the colour which appears most frequently in class $V_j$ for $j=1 \ldots r+1$.  We show that the colours $c_1,\ldots c_{r+1}$ must all be distinct and cannot appear in any other class.  Suppose that colour $c_j$ appears in another class among $V_1$ to $V_{2r}$, say in class $V_t$.  Let $\sigma$ be a parition such that $\Delta(\sigma)=\Delta(Q)$ and observe that $\Delta(Q) \geq \Delta(Q^*)$.  Also since $M \not \in Q$, $\Delta(Q)=\Delta(\sigma)<r$.

Let $E$ be an edge with $type(E)=\sigma$.  We choose the $\Delta$-part of $E$ from the vertices in $V_j$ of colour $c_j$, and another part from class $V_t$ to include the vertex of colour $c_j$, with the rest of the parts chosen from the remaining classes.  Then $pat(E)$ has at least $\Delta(Q)+1 \geq \Delta(Q^*)+1$ vertices of the same colour $c_j$, which is not valid in a $Q^*$-colouring of $H$.  Hence $c_1,\ldots ,c_{r+1}$ are all distinct and each colour $c_j$ only appears in class $V_j$ for $j=1 \ldots r+1$.  But now let us choose an edge $E$ with $type(E)=\pi \in \Sigma$ from among the classes $V_1,\ldots, V_{r+1}$.  This edge has $pat(E)=\pi \not \in Q^*$, making the $Q^*$-colouring of $H$ invalid.  Hence $H$ is not $(Q \backslash \pi)$-colourable.

\end{proof}

\section{Bounded clique number and Unbounded Chromatic Number}

We now consider the size of the largest clique, $\omega(H)$, for a $\Sigma$-hypergraph $H$.   In general finding the size of the largest clique in a graph or hypergraph is $NP$-complete problem \cite{garey2002computers}.  Here we will show that for $\Sigma$-hypergraphs $H(n,r,q|\Sigma$ ), $\omega(H)$ depends only on some structural properties of $\Sigma$,  and hence $\omega(H$) can by computed in $O(1)$ time for fixed $\Sigma$.

We start with a definition.  For a family $F \subseteq \Sigma$ of partitions of $r$ and $k \geq r$, we say that $F$ is \emph{$k$-full} if the following conditions hold:
\begin{enumerate}
\item{There exist positive integers $b_1,b_2,\ldots, b_t$ such that $b_1+b_2+\ldots+b_t=k$.}
\item{For every $A=\{a_1,\ldots,a_t\}$ such that $b_i  \geq a_i \geq 0$ for $i=1 \ldots t$ and $\sum_{i=1}^{t}a_i = r$, there exists $\pi \in F$ such that the elements of $\pi$ are $sup (A)=\{a_i: a_i>0\}$, the \emph{support} of $A$.}
\end{enumerate}

\begin{theorem}
Given $H=H(n,r,q \mid \Sigma)$ with $n \geq s(\Sigma)$ and $q \geq \Delta(\Sigma)$, then $\omega(H) = \max \{k: \exists F \subseteq \Sigma \mbox{ where $F$ is $k$-full}\}$.
\end{theorem}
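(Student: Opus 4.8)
The plan is to establish the two inequalities $\omega(H) \geq \max\{k : \exists F \subseteq \Sigma \text{ that is } k\text{-full}\}$ and $\omega(H) \leq \max\{k : \ldots\}$ separately. For the lower bound, suppose $F \subseteq \Sigma$ is $k$-full, witnessed by positive integers $b_1, \ldots, b_t$ with $\sum b_i = k$. Since $q \geq \Delta(\Sigma) \geq \max_i b_i$ (note each $b_i \leq k$ but more carefully each $b_i$ must be at most $\Delta(\Sigma)$, else the singleton-support requirement fails) and $n \geq s(\Sigma) \geq t$, I can pick $t$ distinct classes $V_1, \ldots, V_t$ and within $V_i$ choose a subset $W_i$ of $b_i$ vertices; set $W = \bigcup_i W_i$, so $|W| = k$. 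I claim $W$ induces a clique: any $r$-subset $K \subseteq W$ meets $V_i$ in some $a_i$ vertices with $0 \leq a_i \leq b_i$ and $\sum a_i = r$, so by the $k$-fullness condition there is $\pi \in F \subseteq \Sigma$ whose parts are exactly the support of $(a_1, \ldots, a_t)$ — which is precisely $type(K)$ — hence $K \in E(H)$. Thus $\omega(H) \geq k$.

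For the upper bound, let $W \subseteq V(H)$ induce a clique with $|W| = \omega(H) =: k$, and let $b_1, \ldots, b_t$ be the nonzero values of $|W \cap V_i|$ over the classes $V_i$, so $\sum b_i = k$ and each $b_i \geq 1$; relabel so these are $V_1, \ldots, V_t$. Define $F = \{type(K) : K \subseteq W, |K| = r\} \subseteq \Sigma$ (every such $K$ is an edge since $W$ is a clique). I need to check $F$ is $k$-full with respect to these $b_i$. Condition (1) holds by construction. For condition (2): given any $(a_1, \ldots, a_t)$ with $0 \leq a_i \leq b_i$ and $\sum a_i = r$, pick $a_i$ vertices from $W \cap V_i$ for each $i$; their union is an $r$-subset $K$ of $W$, hence an edge, and $type(K)$ is exactly the support of $(a_1, \ldots, a_t)$, so the required $\pi = type(K) \in F$ exists. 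Therefore $k \leq \max\{k' : \exists F \subseteq \Sigma \text{ that is } k'\text{-full}\}$, completing the proof.

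The main subtlety — and where I would be most careful — is the interplay of the bounds $n \geq s(\Sigma)$ and $q \geq \Delta(\Sigma)$ with the fullness witnesses in the lower-bound direction. Specifically, one must verify that if $F$ is $k$-full with witnesses $b_1, \ldots, b_t$, then $t \leq s(\Sigma)$ and each $b_i \leq \Delta(\Sigma)$, so that the classes and vertices needed to realise the clique actually exist in $H$; this follows because taking $A$ with all parts maximal forces a partition $\pi \in F$ with $t$ parts, so $t \leq s(\Sigma)$, and taking $A$ concentrated in a single coordinate (using $r \geq b_i$, i.e. $b_i \leq \Delta(\Sigma) \leq r$, which is implicit since singleton supports of size $b_i \leq r$ are needed) forces a part of size $b_i$, so $b_i \leq \Delta(\Sigma)$. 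I would state this observation explicitly as the bridge between the combinatorial condition and the hypergraph parameters. The rest is bookkeeping: both directions are essentially the same correspondence between cliques in $W$ and $k$-full subfamilies, read in opposite directions.
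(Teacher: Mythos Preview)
Your two-inequality structure is exactly the paper's: build a clique from $k$-full witnesses for the lower bound, and read off witnesses from a maximum clique for the upper bound. On that core argument you and the paper agree.

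You are right, however, to flag the subtlety that the paper glosses over: in the lower-bound direction one must know that $t\le n$ and $b_i\le q$ in order to actually pick the vertices inside $H(n,r,q\mid\Sigma)$. The paper simply writes ``choose $b_i$ vertices from $V_i$'' without checking this. Your attempt to close the gap, though, does not work as written. Saying ``take $A$ with all parts maximal'' is not meaningful, since $\sum b_i=k>r$ in general, so $(b_1,\dots,b_t)$ is not itself an admissible $A$; and ``take $A$ concentrated in a single coordinate'' means $a_i=r$, which needs $b_i\ge r$ to begin with and then only forces $M\in\Sigma$, not $b_i\le\Delta(\Sigma)$. More seriously, the conclusions you want are simply false without extra hypotheses: if $M\in\Sigma$ then $F=\{M\}$ is $k$-full for every $k$ via $t=1$, $b_1=k$, so $b_1\le\Delta(\Sigma)$ fails; if $R\in\Sigma$ then $F=\{R\}$ is $k$-full via $t=k$, $b_i=1$, so $t\le s(\Sigma)$ fails. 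In either case the right-hand maximum is infinite while $\omega(H)$ is bounded by a function of $n,q$, so the theorem as stated cannot hold without further restriction.

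What \emph{does} work is this: if $M\notin\Sigma$, set $a_i=b_i$ and distribute the remaining $r-b_i$ among the other coordinates (possible since $k\ge r$); then $b_i$ occurs as a part of some $\pi\in\Sigma$, giving $b_i\le\Delta(\Sigma)\le q$. If $R\notin\Sigma$, first $t\le r$ (else choosing $r$ coordinates with $a_i=1$ forces $R\in\Sigma$), and then starting from $a_i=1$ for all $i$ and adding $r-t$ within the slack $b_i-1$ gives an $A$ with $t$ positive entries, so $t\le s(\Sigma)\le n$. Thus your bridge lemma is valid precisely under the standing hypothesis $M,R\notin\Sigma$ used in the surrounding results; you should state it with that hypothesis rather than claim it in general.
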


\begin{proof}

Let $K$ be a fixed maximal clique.  Suppose $\{v_1,\ldots,v_r\}$ is a set of $r$ vertices in $K$.  Since $K$ is a clique, these vertices must form an edge $E$ with $type(E)=\pi \in \Sigma$.  Clearly $E = \bigcup\{E \cap V_i: i=1 \ldots n\}$.

Let $a_i=|E \cap V_i|$ and let $b_i = |K \cap V_i|$.  We observe that $\sum_1^n b_i=|K|$ and $\sum_i^n a_i=r$, and $a_i \leq b_i$ for $i=1 \ldots n$.  Hence we define $F=\{ \pi : \exists E \subset K \mbox{ with } type(E)=\pi\}$, and we see that $F \subset \Sigma$ is $k$-full, proving that $\omega(H) \leq max \{ k: \exists F \subset \Sigma \mbox{ where $F$ is $k$-full}\}$.

Now suppose $F \subset \Sigma$ is $k$-full.  Then there exist $b_1,\ldots, b_t$ such that $b_1+b_2+\ldots+b_t=k$..  Let us assume without loss of generality that $b_1 \geq b_2 \geq \ldots \geq b_t$.

Let $K$ be a set of vertices obtained by choosing $b_1$ vertices from $V_i$ for $i=1 \ldots t$.  We show that $K$ is a clique.

Let $E=\{v_1,\ldots,v_r\}$ be a set of vertices in $K$.  Let $A=\{|E \cap V_i|=a_i, i=1 \ldots t\}$.  We observe that $a_1+a_2+\ldots+a_t=r$, and $a_i \leq b_i$. 

Since $F$ is $k$-full, there is a partition $\pi \in F$ whose parts are the elements of $sup(A)$.  Hence $E$ is an edge in $K$, and since $E$ was arbitrarily chosen,, $K$ is a clique, proving that $\omega(H) \geq max \{ k: \exists F \subset \Sigma \mbox{ where $F$ is $k$-full}\}$.

\end{proof}

The next corollary shows that unless $M$ or $R$ are in $\Sigma$,  $H(n,r,q|\Sigma)$ has bounded clique size.

\begin{corollary} \label{omega}
Consider $H=H(n,r,q \mid \Sigma)$ with $M,R \not \in \Sigma$.  Then $\omega(H) \leq (r-1)^2$.
\end{corollary}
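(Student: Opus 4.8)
The plan is to invoke the preceding theorem, which expresses $\omega(H)$ as the largest $k$ for which some subfamily $F\subseteq\Sigma$ is $k$-full, and then to show that the hypotheses $M,R\notin\Sigma$ force any $k$-full family to satisfy $k\le(r-1)^2$. So the whole argument is really a statement about $k$-full families, with the combinatorial translation already done for us.

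First I would fix a $k$-full family $F\subseteq\Sigma$ together with its witnessing positive integers, which we may order as $b_1\ge b_2\ge\cdots\ge b_t$ with $\sum_{i=1}^{t}b_i=k$. The point of $k$-fullness is that for \emph{every} vector $A=(a_1,\dots,a_t)$ with $0\le a_i\le b_i$ and $\sum_{i}a_i=r$, the partition whose parts are $sup(A)$ must lie in $F\subseteq\Sigma$. So to bound $k$ it suffices to feed this definition two well-chosen vectors $A$ and derive a contradiction with $M\notin\Sigma$ and $R\notin\Sigma$ respectively.

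The two extremal choices do everything. If some $b_i\ge r$, set $a_i=r$ and all other coordinates $0$: this $A$ is admissible and $sup(A)=\{r\}$, so the monochromatic partition $M$ would lie in $\Sigma$, a contradiction; hence $b_i\le r-1$ for every $i$. If $t\ge r$, set $a_1=\cdots=a_r=1$ and $a_{r+1}=\cdots=a_t=0$ (admissible since each $b_i\ge 1$): then $sup(A)$ consists of $r$ parts each equal to $1$, so the rainbow partition $R$ would lie in $\Sigma$, again a contradiction; hence $t\le r-1$. Combining, $k=\sum_{i=1}^{t}b_i\le(r-1)(r-1)=(r-1)^2$.

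Since this bound holds for every $k$-full $F\subseteq\Sigma$, the maximum such $k$ is at most $(r-1)^2$, and by the theorem this maximum is exactly $\omega(H)$, giving $\omega(H)\le(r-1)^2$. There is no real obstacle here; the only minor point is whether the theorem's side conditions $n\ge s(\Sigma)$ and $q\ge\Delta(\Sigma)$ are assumed, but this does not matter, since the inequality $\omega(H)\le\max\{k:F\subseteq\Sigma\text{ is }k\text{-full}\}$ follows from the half of the theorem's proof (the maximal-clique direction) that uses no such assumption.
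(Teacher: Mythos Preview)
Your argument is correct, and the underlying combinatorics is exactly the paper's: bound the number of ``slots'' by $r-1$ using $R\notin\Sigma$, bound each slot's size by $r-1$ using $M\notin\Sigma$, and multiply. The only difference is packaging. The paper does not invoke the preceding theorem at all; it works directly with a maximal clique $K$, observing that if $K$ met at least $r$ classes then $r$ singletons from distinct classes would form an edge of type $R$, and that if $|K\cap V_i|\ge r$ then $r$ vertices from that class would form an edge of type $M$. Your proof is the same two observations re-expressed in the language of $k$-full families (with $t$ playing the role of the number of classes met and $b_i$ the role of $|K\cap V_i|$). Going through the theorem buys nothing here and forces you to worry about its side hypotheses $n\ge s(\Sigma)$, $q\ge\Delta(\Sigma)$, which you correctly note are irrelevant for the upper-bound direction; the paper's direct argument simply never encounters that issue.
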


\begin{proof}

Let $K$ be a maximal clique in $H$.  Suppose $K$ intersects at lest $r$ classes $V_i$.  Then if we take a single vertex from $r$ different classes, this must form an edge$E$  since $K$ is a clique, but $type(E)=R \not \in \Sigma$.  Hence $K$ intersects at most $r-1$ classes.

Now consider the intersection of $K$ with a single class $V_i$.  Suppose $|K \cap V_i| \geq r$.  Then if we choose $r$ vertices from $K \cap V_i$, this must form an edge $E$ since $K$ is a cliqu, but $type(E)=M \not \in \Sigma$.  Hence the size of the intersection of $K$ with any class must be less than $r$.  Therefore $\omega(H) \leq (r-1)^2$.
\end{proof}

We now consider the $Q$-chromatic number $\chi_Q(H)$ and the chromatic number $\chi(H)$, and show that we can construct $\Sigma$-hypergraphs with arbitrarily large chromatic number, but with bounded clique number, a subject that got much attention \cite{erdos1975problems,Gebauer20131483,kostochka2010constructions}.

\begin{theorem}

Consider $H=H(n,r,q \mid \Sigma)$ with $M,R \not \in \Sigma$ and $M \not \in Q \subseteq P(r)$.  Let $t$ be an arbitrarily large integer.  Then there exists a $\Sigma$-hypergraph $H$ with $\chi_Q(H) \geq \chi(H) \geq t$ and  $\omega(H) \leq (r-1)^2$.
\end{theorem}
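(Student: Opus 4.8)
The plan is to decompose the statement into three pieces. The bound $\omega(H)\le (r-1)^2$ will be free from Corollary~\ref{omega} (which needs only $M,R\notin\Sigma$, no hypothesis on $n$ or $q$), and the inequality $\chi_Q(H)\ge\chi(H)$ will follow at once from $M\notin Q$: any $Q$-colouring satisfies $pat(E)\ne M$ for every edge $E$, hence is a proper classical colouring, so $k$-$Q$-colourability of $H$ forces $\chi(H)\le k$; taking $k=\chi_Q(H)$ gives the inequality (and if $H$ has no $Q$-colouring at all one reads $\chi_Q(H)=\infty$ and the chain is trivial). So the real content is to exhibit, for each $t$, a choice of $n,q$ for which $H=H(n,r,q\mid\Sigma)$ has classical chromatic number at least $t$; this I would do by a two-level pigeonhole on the classes.

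Concretely, fix any $\sigma=(\sigma_1,\dots,\sigma_s)\in\Sigma$ (we may assume $\Sigma\neq\emptyset$, otherwise $H$ is edgeless and nothing is claimed), with $s=s(\sigma)$ and $\sigma_1=\Delta(\sigma)\le\Delta(\Sigma)$. Set $q=(t-1)\,\Delta(\Sigma)$ and $n=(t-1)\,s$, and consider $H=H(n,r,q\mid\Sigma)$. Take any colouring of $V(H)$ with at most $t-1$ colours. In each class $V_i$ some colour $\gamma_i$ is used on at least $q/(t-1)=\Delta(\Sigma)\ge\sigma_j$ vertices of $V_i$, for every part $\sigma_j$ of $\sigma$. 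The assignment $i\mapsto\gamma_i$ maps the $n=(t-1)s$ classes into a palette of at most $t-1$ colours, so some colour $\gamma$ occurs as $\gamma_i$ for at least $s$ distinct classes, say $V_{i_1},\dots,V_{i_s}$. Choosing $\sigma_j$ vertices of colour $\gamma$ from $V_{i_j}$ for $j=1,\dots,s$ gives $r$ vertices whose vector of class-intersection sizes is exactly $\sigma\in\Sigma$, hence an edge, and this edge is monochromatic. Thus no proper classical colouring of $H$ uses fewer than $t$ colours, i.e.\ $\chi(H)\ge t$, and combined with the two easy observations above the theorem follows.

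This argument is short, so there is no genuinely hard calculation; the points to handle with care are (i) that the two thresholds, $n\ge (t-1)s(\sigma)$ and $q\ge (t-1)\Delta(\Sigma)$, can be imposed independently of one another once $\sigma$ is chosen, and (ii) the observation $\Delta(\Sigma)\ge\sigma_j$ for every part of $\sigma$, which is precisely what permits the selection of the vertices forming the monochromatic edge. The only conceptual subtlety — really the main thing one should flag rather than a true obstacle — is the well-definedness of $\chi_Q(H)$ when $\Sigma$ and $Q$ are unrelated, which is dealt with by the $\chi_Q=\infty$ convention noted above.
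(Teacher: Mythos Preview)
Your proof is correct and follows essentially the same strategy as the paper: a two-level pigeonhole (first within each class to find a frequently repeated colour, then across classes to find many classes sharing the same dominant colour) forces a monochromatic edge under any $(t-1)$-colouring, while Corollary~\ref{omega} handles the clique bound. Your thresholds $n=(t-1)s(\sigma)$, $q=(t-1)\Delta(\Sigma)$ differ only cosmetically from the paper's uniform choice $n=q=(r-2)(t-1)+1$ (which exploits $s(\Sigma),\Delta(\Sigma)\le r-1$), and your derivation of $\chi_Q(H)\ge\chi(H)$ directly from $M\notin Q$ is in fact cleaner than the paper's appeal to the CDMC.
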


\begin{proof}

Let $H = H(n,r,q \mid \Sigma)$.  Let $n \geq (r-2)(t-1)+1$ and $q=(r-2)(t-1)+1$.  By Theorem \ref{omega}, $\omega(H) \leq (r-1)^2$, and $H$ is $n$-$Q$-colourable using the CDMC.  This is a valid classical colouring since $M \not \in \Sigma$, and hence $\chi_Q(H) \geq \chi(H)$.

We now show that $H$ is not $k$-colourable, for $k < t$.  Consider a colouring using $k$ colours, where $ 1 < k <t$.  Since $M, R \not \in \Sigma$, it follows that $\Delta(\Sigma) \leq r-1$ and $s(\Sigma) \leq r-1$.

Since $q\geq (r-2)(t-1) + 1$ and we use at most $t-1$ colours, in each class there must be a colour which appears at least $r-1$ times.  Since $n \geq (r-2)(t-1)+1$ and we use at most $t-1$ colours, it follows that one of the colours appears at least $r-1$ times in $r-1$ classes.  Thus there must be an edge of $H$ contained in these classes which is monochromatic, which is not a valid colouring, and hence not a valid $Q$-colouring of $H$. We conclude that $t \leq \chi(H) \leq \chi_Q(H) \leq n$, and $\omega(H) \leq (r-1)^2$.
\end{proof}

\section{Q-colourings and gaps}

We now turn to the conditions for the existence/non-existence of gaps in the $Q$-spectrum of a hypergraph $H$.  We start with some definitions. These are similar to the definitions given in \cite{2010pattern}, but in our case, they are restricted to non-oriented $r$-uniform hypergraphs.  Our approach in this section incorporates and develops several arguments  from  \cite{2010pattern}, together with arguments specifically  developed to treat $\Sigma$-hypergraphs.  

Consider a partition $\sigma=(a_1,a_2,\ldots,a_s)$ of $r$ into $s$ parts.  We say that a partition $\pi$ is \emph{derived from $\sigma$ by reduction} if it was formed by choosing $a_i,a_j$ in $\sigma$ and replacing them by $a_i+a_j$.

The set of partitions of $r$ that are formed by repeatedly applying reduction starting from $\sigma$ is denoted by $RD(\sigma)$.  Similarly, for a set of partitions $Q \subseteq P(r)$, $RD(Q)$, the set of reduction-derived partitions is \[\bigcup _{\sigma \in Q} {RD(\sigma)}.\]

A set $Q \subseteq P(r)$ is called \emph{reduction closed} if $RD(Q)=Q$, and $RD(Q)$ is called the \emph{reduction closure of Q}.  It is clear that $RD(RD(Q))=RD(Q)$ and that $M$ is always in $RD(Q)$.

As an example, consider $r=6$ and $\sigma=(3,1,1,1)$.  The partitions derived from $\sigma$ are $(4,1,1)$ and $(3,2,1)$ - applying reduction to these two partitions give the partitions $(5,1)$ $(4,2)$ and $(3,3)$, which in turn give the partition $(6)=M$.  Hence $RD((3,1,1,1))=\{(3,1,1,1),(4,1,1),(3,2,1),(5,1),(4,2),(3,3),M\}$.

Now consider a partition $\sigma=(a_1,a_2,\ldots,a_s)$ of $r$ into $s$ parts.  We say that a partition $\pi$ is \emph{derived from $\sigma$ by expansion} if it was formed by choosing $a_j$ in $\sigma$ and replacing it by $a_j-1$ and $1$.

The set of partitions of $r$ that are formed by repeatedly applying expansion starting from $\sigma$ is denoted by $EX(\sigma)$.  Similarly, for a set of partitions $Q \subseteq P(r)$, $EX(Q)$, the set of expansion derived partitions is \[\bigcup _{\sigma \in Q} {EX(\sigma)}.\]

A set $Q \subseteq P(r)$ is called \emph{expansion closed} if $EX(Q)=Q$, and $EX(Q)$ is called the \emph{expansion closure of Q}.  It is clear that $EX(EX(Q))=EX(Q)$ and that $R$ is always in $EX(Q)$.

As an example, consider $r=6$ and $\sigma=(3,3)$.  The partition derived from $\sigma$ is $(3,2,1)$, and those derived from $(3,2,1)$ are $(2,2,1,1)$ and $(3,1,1,1)$.  These in turn give $(2,1,1,1,1)$ and finally $(1,1,1,1,1,1)=R$.  Hence $EX((3,3))=\{ (3,3),(3,2,1),(2,2,1,1),(3,1,1,1),(2,1,1,1,1),R\}$.

A set $Q$ of partitions of $r$ is called \emph{simply closed} if $Q$ contains $EX(M)=\{(r),(r-1,1), (r-2,1,1),\ldots, (2,1,1,1,\ldots,1),(1,1,\ldots,1)\}$, the expansion closed family which is obtained starting with the partition $M$.  

We call a set $Q \subseteq P(r)$ \emph{robust}  if it is reduction-closed, expansion-closed or simply-closed.

We can rephrase Theorem 20 in \cite{2010pattern} using our terminology as follows:

\begin{theorem} \label{theorem20}
There are no gaps in the $Q$-spectrum of any $r$-uniform hypergraph $H$ if and only if $Q \subseteq P(r)$ is robust.
\end{theorem}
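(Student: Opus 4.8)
The plan is to establish both directions separately, since the theorem is an "if and only if''. For the easy direction --- that robustness of $Q$ forces the $Q$-spectrum of every $r$-uniform $H$ to be gap-free --- I would argue by a single interpolation lemma for each of the three closure types. Suppose $H$ has a $k$-$Q$-colouring $c$ and a $k''$-$Q$-colouring with $k<k''$; I want a $(k+1)$-$Q$-colouring whenever $k+1\le k''$, which by induction fills every gap. If $Q$ is reduction-closed, take a $k''$-$Q$-colouring and \emph{merge} two colour classes into one; every edge pattern either is unchanged or has two parts combined into their sum, i.e. it becomes a reduction-derivative, which stays in $Q$ --- so from the top we can step down one colour at a time all the way to $\chi_Q$, and there is no gap. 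Dually, if $Q$ is expansion-closed, start from a $k$-$Q$-colouring near the bottom and \emph{split off} a single vertex from some colour class into a brand-new colour: each edge pattern either is unchanged or has a part $a_j$ replaced by $a_j-1$ and $1$, an expansion-derivative, hence still in $Q$ --- so we can climb one colour at a time up to $\overline{\chi}_Q$. The simply-closed case is handled by the same splitting-off move restricted to classes whose patterns already lie in $EX(M)$; since $EX(M)$ is itself expansion-closed and contained in $Q$, the argument of the expansion case applies verbatim to any colour class one chooses to refine. In all three cases the spectrum is an interval, so no gap.

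For the hard direction --- that a non-robust $Q$ admits an $r$-uniform $H$ with a broken $Q$-spectrum --- I would take the contrapositive assumption: $Q$ is neither reduction-closed, nor expansion-closed, nor simply-closed. The strategy is to manufacture a witness hypergraph, and here $\Sigma$-hypergraphs (or small ad hoc uniform hypergraphs) are the natural tool: pick a partition $\pi\in RD(Q)\setminus Q$ or $\pi\in EX(Q)\setminus Q$ witnessing the failure of the respective closure, and build a hypergraph one of whose colourings is forced to realise $\pi$ on some edge at an intermediate number of colours while both a smaller and a larger colour count can avoid it. Concretely I expect to combine a ``rigid'' gadget --- in the spirit of the tightly $Q$-colourable $\Sigma$-hypergraph $H(2r,r,(r-1)^2+1\mid\Sigma=Q)$ of Theorem~\ref{tight}, whose only $Q$-colouring is the CDMC with exactly $2r$ colours --- with a disjoint ``flexible'' gadget that is $Q$-colourable with a small number of colours and also with many, but whose colourings are obstructed precisely in the middle range by the excluded derived partition. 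Taking a disjoint union (or a careful amalgamation sharing no edges) of these two pieces and inserting extra isolated-class padding to place the forced value $2r$ strictly between the low and high values of the flexible part yields $k_1<k_2<k_3$ with $k_1$- and $k_3$-$Q$-colourability but not $k_2$.

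The main obstacle, and the step I would spend most care on, is the construction of the flexible gadget whose $Q$-spectrum genuinely skips the intermediate value --- equivalently, showing that the failure of all three closure conditions \emph{simultaneously} is exactly what is needed, rather than merely the failure of one. One must check that if $Q$ avoided, say, reduction-closedness but happened to be expansion-closed, no gap could be forced, so the hypothesis really must use the conjunction; this is where the trichotomy in the definition of ``robust'' is doing work, and the bookkeeping of which patterns are reachable by merging versus splitting is delicate. A secondary technical point is ensuring that the two gadgets do not interfere: since $Q$-colourability is determined edge by edge, a disjoint union has $Spec_Q(H_1\sqcup H_2)=\{a+b-c: a\in Spec_Q(H_1),\,b\in Spec_Q(H_2),\,0\le c\le\min(a,b)\}$ only up to the caveat that colour classes may be shared across components, so one must argue the shift of the rigid value into the desired window carefully, most cleanly by padding $H_1$ with additional monochromatic classes of a type already in $EX(M)\cap Q$ or by reusing the $\Sigma$-hypergraph machinery with a larger number of classes. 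Once the gadgets and their interaction are pinned down, exhibiting the three colour counts $k_1<k_2<k_3$ is routine.
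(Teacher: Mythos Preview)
Your treatment of the direction ``$Q$ robust $\Rightarrow$ no gap'' is essentially the paper's. The merge-two-classes argument for reduction-closed $Q$ and the split-off-one-vertex argument for expansion-closed $Q$ are exactly Cases~1 and~2 of the paper's proof. For the simply-closed case the paper does \emph{not} interpolate from a given colouring: it simply observes that for each $k$ the colouring ``$k-1$ singletons plus one large class'' gives every edge a pattern $(r-j,1,\ldots,1)\in EX(M)\subseteq Q$, so $Spec_Q(H)=\{1,\ldots,n\}$ outright. Your idea (apply the expansion argument inside the expansion-closed subfamily $EX(M)\subseteq Q$) is equivalent once you start from the monochromatic colouring, but as written it is muddled: your interpolation framing begins from an arbitrary $k$-$Q$-colouring, and splitting a vertex there can land outside $Q$ (e.g.\ $r=5$, $Q=EX(M)\cup\{(3,2)\}$: an edge with pattern $(3,2)$ can become $(2,2,1)\notin Q$). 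The phrase ``restricted to classes whose patterns already lie in $EX(M)$'' does not repair this, since colour classes do not carry patterns; you need the invariant that \emph{all edge patterns} lie in $EX(M)$, which forces the starting point to be the $1$-colouring --- and then you have exactly the paper's construction.

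Note also that the paper proves \emph{only} this direction; the converse is quoted from \cite{2010pattern} and, for $\Sigma$-hypergraphs, is replaced by the partial Theorem~\ref{necessity}. Your plan for the converse has a genuine gap. The disjoint-union mechanism cannot manufacture a break: if $Spec_Q(H_1)=\{2r\}$ (your rigid tight gadget) and $H=H_1\sqcup H_2$, then $k\in Spec_Q(H)$ iff there is $k_2\in Spec_Q(H_2)$ with $\max(2r,k_2)\le k\le 2r+k_2$, so $Spec_Q(H)$ is a union of intervals indexed by $Spec_Q(H_2)$; when $Spec_Q(H_2)$ is an interval, so is $Spec_Q(H)$. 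In other words the gap has to be present already in the ``flexible gadget'', which is precisely the object you have not constructed. Moreover the tight gadget of Theorem~\ref{tight} requires $M,R\notin Q$, so it is unavailable in two of the three non-robust subcases. The paper's partial converse avoids all of this by building a single $\Sigma$-hypergraph per case (Lemmas~4.3--4.5), using the CDMC and a rainbow/monochromatic colouring to hit two widely separated values of $k$ and then arguing directly that an intermediate $k$ forces a forbidden pattern witnessing the failure of the relevant closure; this is the idea your proposal is missing.
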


We shall here prove the necessity of the condition.

\begin{proof}
\noindent \emph{Case 1 --- $Q$ is reduction-closed}

$\mbox{  \\}$

Let $H$ be an $r$-uniform hypergraph.  Clearly, $H$ is $1$-$Q$-colourable since $M \in Q$.  Now suppose $k$ is the maximum number of colours for which $H$ is $k$-$Q$-colourable.  Consider a $k$-$Q$-colouring of $H$, hence each edge has $pat(E) \in Q$.  Let us recolour the vertices coloured $k$ with the colour $k-1$.  Let us consider the possible effects of this:
\begin{enumerate}
\item{Consider the edges in which the colour $k$ did not appear.  Then the colouring pattern of these edges has not changed and hence is still a colour pattern in $Q$.}
\item{Consider edges in which the colour $k$ appeared, but not the colour $k-1$.  If we change the colour $k$ to $k-1$, the colour pattern of the edges remain the same.}
\item{Consider edges in which the colour $k$ appeared, as well as the colour $k-1$.  If there are $a_i$ vertices coloured $k$ and $a_j$ vertices coloured $k-1$, then the new edge has $a_i+a_j$ vertices coloured $k-1$, and this colour pattern is in $Q$ since $Q$ is reduction closed.}
\end{enumerate}

So  we have a valid $(k-1)$-$Q$-colouring of $H$.  This process can be repeated until we end up with one colour, that is a monochromatic colouring.  This is possible since $Q$ is reduction closed.  Therefore there are no gaps in the $Q$-spectrum of $H$.

$\mbox{  \\}$

\noindent \emph{Case 2 --- $Q$ is expansion-closed}

$\mbox{  \\}$

Let $H$ be an $r$-uniform hypergraph on $n$ vertices.  Clearly, $H$ is $n$-$Q$-colourable since $R \in Q$.  Now suppose the minimum colours for which a $Q$-colouring of $H$ exists is $k<n$, and consider a $k$-$Q$-colouring of $H$.  Let us choose a vertex $v$  of colour $x$ which appears at least twice ( there must be such a colour since $k < n$).  Let us change the colour of $v$ to a new colour $y$.  Hence there are now $k+1$ colours on the vertices of $H$.   Let us consider the possible effects of this:
\begin{enumerate}
\item{For edges which do not include the vertex $v$, the colour pattern has not changed.}
\item{If the edge includes the vertex $v$, but no other vertex of colour $x$, the new edge has the same colour pattern.}
\item{If the edge includes the vertex $v$ and other vertices of colour $x$, let the number of vertices of colour $x$ (including $v$) be $a_i$.  If we change the colour of $v$ to colour $y$, we now have $(a_i-1)$ vertices of colour $x$, and one vertex of colour $y$.  Since $Q$ is expansion-closed, this new colour pattern is in $Q$.}
\end{enumerate}

Hence we have a valid $(k+1)$-$Q$-colouring of $H$.  Again, this process can be repeated until we reach $n$ colours since $Q$ is expansion-closed.  Therefore there are no gaps in the $Q$-spectrum of $H$.

$\mbox{  \\}$

\noindent \emph{Case 3 --- $Q$ is simply-closed}

$\mbox{  \\}$

Let $H$ be an $r$-uniform hypergraph on $n$ vertices.  For each $k$, $1 \leq k \leq n$ we colour the vertices of $H$ as follows:  we take $k-1$ vertices say $v_1$ to $v_{k-1}$ and colour them with $k-1$ distinct colors say $1 ,\ldots ,k-1$.  We colour the remaining vertices all with the same colour $k$.  Any edge will have the colour pattern $(r-j,1,1,\ldots,1)$ for $0 \leq j \leq r$, so $H$ is $k$-$Q$-colourable for $1 \leq k \leq n$.

\end{proof}

Rather than dealing with any $r$-uniform hypergraph to prove the full converse (which can be done following the original proof modified to the $Q$-colouring language), we now prove a partial converse to Theorem \ref{theorem20} for $\Sigma$-hypergraphs, and then prove that hypergraphs of the form $H=H(n,4,q|\Sigma)$  have no gap in their $Q$-spectrum  where $Q = \{ ( 3,1) \}$ despite $Q$ being non-robust.

\begin{theorem} \label{necessity}
If either  $M \in Q$ or $R \in Q$, and $Q$ is not a robust family, then there exists a $\Sigma$-hypergraph $H$ with a gap in its $Q$-spectrum.
\end{theorem}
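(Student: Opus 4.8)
The plan is to break into the two cases $M \in Q$ and $R \in Q$, and in each case exhibit a $\Sigma$-hypergraph whose $Q$-spectrum has a gap, using the failure of robustness to locate where the spectrum breaks. Since $Q$ is not robust, in particular $Q$ is neither reduction-closed nor expansion-closed, and (when $M \in Q$) $Q$ is not simply-closed. I would first handle the case $M \in Q$. Here $Q$ fails to be reduction-closed, so there is a partition $\sigma \in Q$ and a reduction $\pi$ of $\sigma$ with $\pi \notin Q$; iterating, we may assume $\pi = M$ is reachable from $\sigma$ by reductions staying ``just outside'' $Q$ at the critical step, but the cleaner statement to extract is: there is some $\sigma \in Q$ admitting a single reduction to a partition $\pi \notin Q$. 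The idea is to take $H = H(n, r, q \mid \Sigma = Q)$ for suitable large $n$ and $q$. The CDMC gives an $n$-$Q$-colouring (every edge has pattern in $\Sigma = Q$), and the monochromatic colouring is a $1$-$Q$-colouring since $M \in Q$. So $1$ and $n$ both lie in the $Q$-spectrum. The work is to show some intermediate value $k$ is missing: we want to choose parameters so that any $k$-$Q$-colouring with $1 < k < n$ is forced, by a pigeonhole argument on the colour distribution within classes (exactly as in the proof of Theorem \ref{tight}), to create an edge realising the forbidden reduction $\pi$, or the forbidden pattern $M$ or $R$.

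For the case $R \in Q$, the situation is dual: $Q$ fails to be expansion-closed, so there is $\sigma \in Q$ with an expansion $\pi \notin Q$. Now the roles reverse — the large colour number $n$ is automatic ($R \in Q$), a small colour number is achieved via CDMC if $M \in \Sigma$ or by a coarser monochromatic-heavy colouring otherwise, and we must block an intermediate value. Again I would take $H = H(n, r, q \mid \Sigma = Q)$ and argue that a $k$-$Q$-colouring for the critical intermediate $k$ forces some class to be split in a way that produces an edge with the forbidden expanded pattern $\pi$ (or pattern $R$). The detailed bookkeeping — how many classes can be ``colourful'' before a rainbow edge appears, and how the pigeonhole constant $q$ must be chosen relative to $r$ and the structure of $\sigma$ — mirrors Corollary \ref{omega} and Theorem \ref{tight}, so I would quote those style of estimates rather than redo them.

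The main obstacle, and the part requiring genuine care, is pinning down \emph{which} intermediate $k$ is missing and proving it is missing for \emph{this specific} $\Sigma$-hypergraph rather than merely for some abstract uniform hypergraph. The failure of robustness only tells us a single bad partition $\pi$ exists; translating ``$\pi \notin Q$'' into ``no $k$-colouring avoids an edge of type $\pi$'' requires that the forced colour-class structure at $k$ colours unavoidably embeds $\sigma$'s parts across classes in a way that collapses (or expands) to $\pi$. I expect one needs to iterate the reduction/expansion: if the single forbidden reduction $\pi$ happens to be avoidable, one moves along the chain $\sigma \to \pi \to \cdots \to M$ (resp. $\sigma \to \cdots \to R$) and uses that $Q$ is not robust to guarantee the chain actually leaves $Q$ at a controllable point, then sets $n$ just above that point and $q$ large enough (on the order of $(r-1)^2$ as in Corollary \ref{omega}) for pigeonhole to bite. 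A secondary subtlety is the case $M, R$ both in $Q$ but $Q$ still non-robust: then $Q$ is not simply-closed either, and one must check the simply-closed obstruction does not accidentally rescue the spectrum; I would dispose of this by noting that if $Q \supseteq EX(M)$ then $Q$ would be simply-closed, contradicting non-robustness, so some pattern $(r-j,1,\dots,1)$ is absent, and that absent pattern is exactly what the natural ``$k-1$ rainbow vertices plus one big monochromatic class'' colouring would need — forcing that value of $k$ out of the spectrum while $1$ and $n$ remain in it.
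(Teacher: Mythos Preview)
Your outline follows the same blueprint as the paper --- take $\Sigma = Q$, use the CDMC together with the monochromatic or rainbow colouring to place two values in the spectrum, then use a non-closure witness $\sigma \in Q$, $\pi \notin Q$ to kill an intermediate value via pigeonhole on colour multiplicities inside the classes. That much is correct. The genuine gap is exactly the point you flag as ``requiring genuine care'': you never name the forbidden $k$, and your speculation about ``iterating along the chain $\sigma \to \cdots \to M$'' is off-track. In the paper a \emph{single} witness suffices because the blocked value is chosen adjacent to an endpoint: for $R \in Q$, $M \notin Q$ one takes $H(r,r,r^2 \mid \Sigma = Q)$ and blocks $k = r+1$ (each class has a dominant colour occurring $\ge r$ times; these are forced to be pairwise distinct using $\Delta(Q)<r$; the one extra colour then realises the forbidden expansion $\sigma^*$ on an edge of type $\sigma$); for $M \in Q$, $R \notin Q$ one takes $H(r^2,r,r \mid \Sigma = Q)$ and blocks $k = r^2 - 1$ (every class is forced monochromatic using $s(Q)<r$, so two classes share a colour, and placing the two merged parts of $\sigma$ in those classes realises the forbidden reduction). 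No chain-chasing is needed.

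Your treatment of the overlap case $M, R \in Q$ contains an actual logical error. You note that some pattern $(r-j,1,\dots,1)$ is absent from $Q$ and conclude that this ``is exactly what the natural `$k-1$ rainbow vertices plus one big monochromatic class' colouring would need --- forcing that value of $k$ out of the spectrum''. But exhibiting one $k$-colouring that fails does not remove $k$ from the spectrum; you must rule out \emph{every} $k$-colouring. The paper's fix is to observe that since $M \in \Sigma$ (or $R \in \Sigma$), the $\Sigma$-hypergraph contains a copy of the complete $r$-uniform hypergraph on $r^2$ vertices, and in \emph{any} $r$-colouring of that complete hypergraph pigeonhole gives one colour on at least $r$ vertices while all $r$ colours appear, so edges of every pattern $(r-j,1,\dots,1)$, $0 \le j \le r$, are unavoidable. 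Hence $k = r$ is blocked for every $r$-colouring, while $1$ and $r^3$ lie in the spectrum.
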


We shall prove this theorem in a series of three lemmas.

\begin{lemma}
Suppose $Q \subset P(r)$ and $M,R \in Q$, but $Q$ is not simply closed.  Then the $\Sigma$-hypergraph $H$ on $r^3$ vertices (and a hypergraph K on $r^2$ vertices)  has a gap in its $Q$-spectrum.
\end{lemma}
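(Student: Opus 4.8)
The plan is to exploit the fact that, since $Q$ contains both $M$ and $R$ but is not simply closed, there is a ``hole'' in the chain $EX(M) = \{(r), (r-1,1), (r-2,1,1), \ldots, R\}$: some partition $\pi_0 = (r-j, 1, 1, \ldots, 1)$ with $1 \le j \le r-1$ is missing from $Q$. I would fix the smallest such $j$, so that $(r), (r-1,1), \ldots, (r-j+1, 1, \ldots, 1)$ are all in $Q$ but $(r-j, 1, \ldots, 1) \notin Q$. The idea is to build a $\Sigma$-hypergraph $H = H(n, r, q \mid \Sigma)$ with $\Sigma = Q$ (or a carefully chosen subset of $Q$ including $M$ and $R$) whose low $Q$-colourings are forced, by pigeonhole on the class sizes, to produce exactly the forbidden pattern $\pi_0$ on some edge, while both very small colour counts (e.g. monochromatic, available since $M \in Q$) and very large colour counts (e.g. the CDMC and beyond, available since $R \in Q$) remain valid — thus creating $k_1 < k_2 < k_3$ with $H$ being $k_1$- and $k_3$-colourable but not $k_2$-colourable.

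Concretely, first I would establish the two endpoints of the spectrum: $H$ is $1$-$Q$-colourable because $M \in Q$ makes the all-one-colour assignment valid, and $H$ is $nq$-$Q$-colourable (rainbow) because $R \in Q$; more usefully, the CDMC gives an $n$-$Q$-colouring since each edge's pattern equals its type, which lies in $\Sigma = Q$. Next, the heart of the argument: I would pick the parameters $n = r$ and $q = r^2$ (matching the claimed bound $r^3 = r \cdot r^2$ on the number of vertices), and show that no $k$-$Q$-colouring exists for some intermediate value of $k$ — the natural candidate being $k$ just below what forces a large colour to reappear. With $q = r^2$ vertices per class and $k \le r^2/(r-j)$-ish colours, pigeonhole forces a colour appearing at least $r-j$ times, or more, in a class; combined with a companion counting argument across the $n = r$ classes one arranges that \emph{every} attempt to colour with this many colours yields an edge whose pattern is exactly $(r-j,1,\ldots,1)$ or something in $EX(\pi_0)$, which I would need to rule out — and here I may need to also assume or arrange that the relevant expansions/reductions of $\pi_0$ are likewise absent, or restrict $\Sigma$ so that $\pi_0$ is genuinely the only pattern such a colouring can realise. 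The secondary hypergraph $K$ on $r^2$ vertices is presumably a complete $r$-uniform hypergraph (or a single-class analogue) for which the same counting works in one block; I would handle it as a corollary of the same pigeonhole estimate, $q = r^2$ vertices forcing a colour of multiplicity $\ge r-j+1$ or a forbidden small pattern.

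The main obstacle I anticipate is making the forcing \emph{tight}: showing not merely that \emph{some} colouring with the intermediate number of colours fails, but that \emph{every} such colouring fails, i.e. that one cannot avoid the forbidden pattern $\pi_0$ by cleverly distributing colours across classes. This requires a careful extremal/pigeonhole computation relating $q$, $n$, $r$, $j$ and $k$, and possibly a case split according to how many classes are ``nearly monochromatic'' versus ``spread out'' — mirroring the structure of the proof of Theorem~\ref{tight}, where classes with $n_j \ge r$ versus $n_j \le r-1$ were treated separately. I would also need to verify that the constructed gap is a genuine gap, i.e. that a \emph{larger} admissible value $k_3$ exists (the CDMC value $n = r$, or the full rainbow $nq = r^3$, both work as long as $k_2 < r$), so the ordering $k_1 < k_2 < k_3$ with $k_1 = 1$, $k_3 = r$ is the cleanest target, forcing the real work into proving non-$k_2$-colourability for the single value $k_2$ identified by the pigeonhole threshold.
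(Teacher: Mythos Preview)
Your proposal has a genuine gap: you fixate on forcing the \emph{specific} missing pattern $\pi_0 = (r-j,1,\ldots,1)$ via a $j$-dependent pigeonhole threshold, and this is exactly what leads you into the difficulty you yourself flag (``here I may need to also assume or arrange that the relevant expansions/reductions of $\pi_0$ are likewise absent''). The paper sidesteps this problem entirely with a much cleaner idea that you do not identify.

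The key observation you are missing is this. Take $K$ to be the complete $r$-uniform hypergraph on $r^2$ vertices (you correctly guess this is what $K$ is, but then do not use it). Set the target gap value to be $k_2 = r$, not some $k_2 < r$. In \emph{any} colouring of $K$ with exactly $r$ colours, pigeonhole gives a colour appearing on at least $r$ vertices, while each of the remaining $r-1$ colours appears at least once. Picking $t$ vertices from the large colour class and $r-t$ singletons of distinct other colours yields, for \emph{every} $t\in\{1,\ldots,r\}$, an edge with pattern $(t,1,\ldots,1)$. Thus every element of $EX(M)$ is realised simultaneously. Since $Q$ is not simply closed, at least one of these is absent from $Q$, and the $r$-colouring is invalid --- regardless of which $(r-j,1,\ldots,1)$ is the missing one. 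Together with $1$-$Q$-colourability (from $M\in Q$) and $r^2$-$Q$-colourability (from $R\in Q$), this gives the gap $1 < r < r^2$ for $K$.

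Your parameter choice $n=r$, $q=r^2$ also misleads you: with those parameters the CDMC uses $n=r$ colours, so you set $k_3=r$ and look for $k_2<r$, which is the wrong window. The paper instead takes $H(r^2,r,r\mid\Sigma=Q)$ (so CDMC uses $r^2$ colours, leaving room for $k_2=r$) and embeds a copy of $K$ by selecting one vertex from each of the $r^2$ classes, using $R\in\Sigma$; alternatively $H(r,r,r^2\mid\Sigma=Q)$ with $M\in\Sigma$ making each class a copy of $K$. In either case the endpoints are $k_1=1$ and $k_3=r^3$, and non-$r$-colourability is inherited from $K$.
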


\begin{proof}
Consider the complete $r$-uniform hypergraph $K$ on $r^2$ vertices.  It is $1$-$Q$-colourable since $M \in Q$, and it is $r^2$-$Q$- colourable since $R \in Q$.  Now consider an $r$-colouring of $K$.  Then at least one colour, say colour 1, appears at least $r$ times, say on vertices $v_1,\ldots,v_r$.  Each other colour appears at least once, say colour $j$, for $2 \leq j \leq r$ appears on the vertex $u_j$.  Then the edge containing $k$ vertices chosen from $v_1, \ldots,v_r$, and $r-k$ vertices chosen from $u_2, \ldots, u_r$ has colouring type $(r-k,1,1,\ldots,1)$, $\forall k=0 \ldots r$.  But this means that $EX(M)$ must be contained in $Q$, a contradiction.

Now consider the $\Sigma$-hypergraphs $H=H(r^2,r,r \mid \Sigma=Q)$ and $H=H(r,r,r^2 \mid \Sigma=Q)$ --- the hypergraphs are $1$-colourable since $M \in Q$ and $r^3$-colourable since $R \in Q$., but is not $r$-colourable since:
\begin{enumerate}
\item{in $H=H(r^2,r,r \mid \Sigma=Q)$, the set of vertices obtained by choosing one vertex from each class has $r^2$ vertices and since $R \in \Sigma=Q$ it is a complete $r$-uniform hypergraph on $r^2$ vertices,so is not $r$-$Q$-colourable by the above argument.}
\item{in $H=H(r,r,r^2 \mid \Sigma=Q)$, , each class $V_i$ of $H$ is a complete $r$-uniform hypergraph on $r^2$ vertices since $M \in Q$,  and again, by the above argument is not $r$-$Q$-colourable.}
\end{enumerate}

\end{proof}

\begin{lemma}
Let $Q \subset P(r)$ such that $R \in Q$, $M \not \in Q$ and $Q$ is not expansion-closed.  Then the $\Sigma$-hypergraph $H=H(r,r,r^2 \mid \Sigma = Q)$ on $r^3$ vertices has a gap in its $Q$-spectrum.
\end{lemma}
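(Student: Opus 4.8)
The plan is to exhibit explicit $k_1 < k_2 < k_3$ with $H = H(r,r,r^2 \mid \Sigma=Q)$ being $k_1$- and $k_3$-$Q$-colourable but not $k_2$-$Q$-colourable. Since $R \in Q$, the CDMC is in fact irrelevant; instead the all-distinct colouring gives a $k_3 = r^3$-$Q$-colouring (every edge is rainbow). For $k_1$ I would note that each class $V_i$ on $r^2$ vertices is not a clique (because $M \notin Q$), but we still want a small valid colouring: colouring each class $V_i$ with a palette that forces every edge-pattern to lie in $Q$. The natural candidate, mirroring the previous lemma, is to use roughly $r$ colours \emph{per class} arranged so that any $r$ vertices inside one class realise some pattern in $Q$ — but since $M \notin Q$ this already needs care, so $k_1$ should be chosen as the minimum number of colours for which $H$ has a $Q$-colouring, and the real content is showing a specific intermediate value $k_2$ is unattainable.

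The key step is to pin down the obstruction coming from $Q$ not being expansion-closed. Since $Q$ is not expansion-closed, there is a partition $\pi = (a_1, \ldots, a_s) \in Q$ and an index $j$ with $a_j \geq 2$ such that the expanded partition $\pi' $ obtained by replacing $a_j$ with $a_j - 1, 1$ is \emph{not} in $Q$. I would pick $\pi$ to be, among all such "bad" partitions, one with as few parts as possible (or with $\Delta(\pi)$ extremal), so that the expansion of its largest part escapes $Q$. The plan is then: take a hypothetical $k_2$-$Q$-colouring with $k_2$ chosen just above the threshold where some class must repeat a colour but small enough that we cannot yet realise $R$ freely; inside a single class $V_i$ (which is a complete $r$-uniform hypergraph since $M \notin Q$ would make it \emph{not} complete — wait, carefully: an edge inside $V_i$ has type $M$, so if $M \notin Q$ then $V_i$ contains \emph{no} edges at all). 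This changes the argument: the edges of $H$ all straddle at least two classes, with edge-type in $\Sigma = Q$, none of which is $M$, so $s(\pi) \geq 2$ and $\Delta(\pi) \leq r-1$ for every $\pi \in Q$. So the obstruction must be engineered across classes.

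Given that, the argument for non-$k_2$-colourability should run: choose $k_2 = $ (something like) $r^2 + r - 1$ or similar — large enough that, by pigeonhole on $q = r^2$ vertices per class against $k_2$ colours distributed over $r$ classes, every class has a colour repeated many times, yet not so large that we have enough distinct colours to make every cross-class edge rainbow. Then, using the "bad" partition $\pi$ and its forbidden expansion $\pi'$, I would show that any attempted $k_2$-colouring is forced (via edges of type $\pi$ or type $\pi'$, both of which have type in $\Sigma$ only if in $Q$) into creating an edge whose pattern is exactly $\pi'\notin Q$. Concretely: pick an edge $E$ of type $\pi$; its $\Delta$-part lives in some class with a colour $c$ repeated $\geq \Delta(\pi)$ times; if anywhere a second vertex of colour $c$ sits in another class, we can swap it into a singleton part of $E$ and obtain pattern with a part of size $\Delta(\pi)+1$, contradicting maximality — forcing each heavily-repeated colour to be confined to one class; counting these confined colours against the $r$ classes and $q = r^2$ then squeezes $k_2$ out of the admissible range.

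The main obstacle I anticipate is calibrating $k_2$ precisely: it must simultaneously be forced-to-repeat-within-classes (a lower bound via pigeonhole needing $q$ large relative to $k_2/\text{(number of classes)}$) and forced-to-not-be-fully-rainbow-realisable, and then the expansion-non-closure has to bite at exactly that value rather than merely somewhere in the spectrum. Unlike the reduction case, where recolouring one colour into another is monotone and always works once $Q$ is reduction-closed, here the failure of expansion-closure only tells us \emph{one} specific expansion step is blocked, so I expect to need the extremal choice of $\pi$ (largest $\Delta$, say) together with the fact that $R \in Q$ to ensure the spectrum does reach $r^3$ on the high end while genuinely skipping the intermediate value — making the bookkeeping of which colour counts are admissible the delicate part of the proof.
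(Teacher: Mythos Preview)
Your proposal has the right instincts in places---the pigeonhole step, the $\Delta(Q)$-maximality argument forcing dominant colours to be class-specific, and the identification of a ``bad'' $\pi\in Q$ whose expansion $\pi'\notin Q$---but it misses the two concrete choices that make the proof go through, and as written it is not a proof.

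First, you dismiss the CDMC, and this is exactly backwards. With $n=r$ classes and $\Sigma=Q$, the CDMC colours each class monochromatically with its own colour; every edge $E$ then has $pat(E)=type(E)\in\Sigma=Q$, so CDMC \emph{is} a valid $r$-$Q$-colouring. This gives $k_1=r$ immediately. Your search for ``a palette per class'' and for the minimum $k$ is unnecessary.

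Second, the correct value of $k_2$ is $r+1$, not something of order $r^2$. Here is why $r+1$ works and your larger candidate does not lead anywhere obvious. With $r+1$ colours and $q=r^2$ vertices per class, pigeonhole gives each class a dominant colour $f(i)$ occurring at least $r$ times. Your $\Delta(Q)$-argument then shows the $f(i)$ are pairwise distinct (since $M\notin Q$ forces $\Delta(Q)<r$). So the $r$ dominant colours are $r$ distinct colours; the $(r{+}1)$st colour must appear on some vertex $v$ in some class, say $V_1$. Now take any edge $E$ of type $\pi$ (your ``bad'' partition) placing the part $a_j\ge 2$ inside $V_1$ using only dominant-coloured vertices, and the remaining parts inside other classes using their dominant colours: then $pat(E)=\pi$. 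Replacing one of the $a_j$ vertices in $V_1$ by $v$ keeps $type(E)=\pi\in\Sigma$ (so it is still an edge) but changes the pattern to exactly $\pi'\notin Q$. That is the contradiction.

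The point you were missing is that the gap sits \emph{immediately above} the CDMC value: one extra colour beyond $r$ is precisely what lets you realise the single forbidden expansion step inside an edge whose type is the bad partition. No extremal choice of $\pi$ is needed, and no delicate calibration of $k_2$; any witness to non-expansion-closure works directly at $k_2=r+1$.
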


\begin{proof}

Consider $H=H(r,r,r^2 \mid \Sigma=Q)$.  Then $H$ is $r$-$Q$-colourable using the CDMC, and $H$ is $r^3$-$Q$-colourable since $R \in Q$.  We claim that $H$ is not $(r+1)$-$Q$-colourable.  Suppose that $f$  is a proper $(r+1)$-$Q$-colouring of $H$.  We will show that this assumption contradicts the fact that $Q$ is not expansion-closed. 

Let $f_i$ be the colour which appears most in class $V_i$ --- it is clear that since $q =r^2$ and we use at most $r+1$ colours in each class, $f_i$ appears at least $r$ times in class $V_i$, for $1 \leq i \leq r$. So we may assume that, without loss of generality, the first $r$ vertices in $V_i$, $v_{i,j}$ for $j=1 \ldots r$, are such that $f(v_{i,j})=f(i)$.  

We first  show that $f(i) \not = f(j)$ for $1 \leq i,j \leq r$.  Suppose for contradiction that for some $i,j$, $f(i)=f(j)$.  Recall that $\Delta(Q)= \max \{ \Delta(\pi): \pi \in Q \}$ , and since $M \not \in Q$, $\Delta(Q) < r$.  Consider an edge $E \in E(H)$ with $type(E)=\sigma$ and  $\Delta(\sigma)=\Delta(Q)$, such that the $\Delta(\sigma)$ vertices are chosen among the first $r$ vertices in $V_i$, so that they all have colour $f(i)$, and such that another part of $\sigma$ is chosen from the first $r$ vertices of the class $V_j$.  Since $f(i)=f(j)$, this would imply that edge $E$ has at least $\Delta(Q)+1$ vertices of the same colour, contradicting the maximality of $\Delta(Q)$.

We will now show that the assumption that $f$ is a proper $(r+1)$-$Q$-colouring of $H$ and the fact that $Q$ is not expansion-closed are contradictory.  Since $Q$ is not expansion-closed, there is a partition $\sigma=(a_1,a_2,\ldots,a_s)$ such that for some $j \geq 2$, the derived partition $\sigma^*=(a_1,\ldots,a_j-1,\ldots,a_s,1)$ is not in $Q$.

Since there are $r$ classes in $H$, we may assume that $f(i)=i$ for $i=1 \ldots r$ and that the colour $r+1$ appears in class $V_1$, without loss of generality.  Let us also assume that $v_{1,r+1}$ has colour $(r+1)$.  

Let $E \in E(H)$ be an edge with $type(E)=\sigma$ such that the part $a_j$ is taken from the first $a_j <r$ vertices of $V_1$ and the remaining parts from other classes, with all parts taken from the first $r$ vertices of the respective class.  Thus $E$ is properly coloured.  Now in part $a_j$ let us replace the vertex $v_{1,1}$ with the vertex $v_{1,r+1}$ to create $E^*$ which is still a valid edge.  Clearly $pat(E^*)=\sigma^*$ which is not in $Q$, contradicting the assumption that $f$ is a proper $(r+1)$-$Q$-colouring of $H$.
\end{proof}

\begin{lemma}
Suppose $Q \subset P(r)$ such that $M \in Q$ but $R \not \in Q$, and $Q$ is not reduction-closed.  Then the $\Sigma$-hypergraph $H=H(r^2,r,r \mid \Sigma=Q)$ on $r^3$ vertices has a gap in its $Q$-spectrum.
\end{lemma}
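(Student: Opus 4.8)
The plan is to mirror the structure of the previous two lemmas, exploiting duality between reduction and expansion. Here the roles are swapped relative to Lemma~4.7: we now have $M\in Q$ and $R\notin Q$, so the natural ``easy'' endpoint of the spectrum is the monochromatic colouring ($1$-$Q$-colourable since $M\in Q$), and the other endpoint should be the CDMC, which is a valid $Q$-colouring of $H=H(r^2,r,r\mid\Sigma=Q)$ on $r^3$ vertices using $r^2$ colours (since $type(E)=pat(E)$ for every edge and $type(E)\in\Sigma=Q$). So the candidate gap sits strictly between $1$ and $r^2$, and I would aim to show that $H$ is not $2$-$Q$-colourable --- or, more precisely, not $k$-$Q$-colourable for the relevant intermediate value. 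Since $Q$ is not reduction-closed, there is a partition $\sigma=(a_1,\ldots,a_s)\in Q$ and a choice of two parts $a_i,a_j$ whose merge $\sigma^\dagger=(a_1,\ldots,a_i+a_j,\ldots,\widehat{a_j},\ldots,a_s)$ is \emph{not} in $Q$; this is the obstruction I want to activate.

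First I would fix a $k$-$Q$-colouring of $H$ with $k\geq 2$ and use the pigeonhole principle on each class: since each class $V_t$ has $q=r$ vertices and $k\geq 2$, one has to be more careful than in Lemma~4.7 --- $q=r$ does not force a colour to repeat at all when $k\geq r$. The right approach is instead the one used already in the proof of Theorem~\ref{tight} and in Case~1/Lemma~4.8: since $R\notin Q$ we have $s(Q)\leq r-1$, so no edge may carry $r$ distinct colours; hence at most $r-1$ classes can contain $r$ or more colours, and consequently at least $r^2-(r-1)$ classes have a most-frequent colour appearing at least $\lceil r/(r-1)\rceil=2$ times. Among these classes I would then argue, exactly as in Theorem~\ref{tight}, that distinct such ``heavy'' classes cannot share their dominant colour (using $\Delta(Q)<r$ since $M\notin Q$ --- wait, here $M\in Q$, so this step must be replaced): the correct leverage is that if two classes shared a colour $c$ that repeats in both, we could build an edge of type $\sigma^\dagger$-witness shape, or more directly, since $R\notin Q$, pack an edge so its pattern has too many colour classes. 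I expect the cleanest route is: take a colouring with $2\le k$ colours, locate two classes $V_i,V_j$ (or one class with enough colour diversity) realising parts $a_i$ and $a_j$ of $\sigma$ in the \emph{same} colour, then form an edge of type $\sigma$ whose $a_i$- and $a_j$-parts are monochromatic of the same colour; its pattern is then $\sigma^\dagger\notin Q$, the contradiction.

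The hard part --- and the step I'd spend the most care on --- is arranging a $2$-$Q$-colouring-style argument that genuinely forces the merge $\sigma^\dagger$ to appear, since unlike the expansion case (Lemma~4.8), a reduction is not automatically realised just by recolouring: we need two parts of $\sigma$ to end up \emph{the same colour}, which requires either two classes sharing a colour or a single class supplying both parts in one colour. With only $k$ colours spread over $r^2$ classes of $r$ vertices, a counting argument (some colour dominates at least $r^2/k$ classes, and if $r^2/k\ge 2$, i.e.\ $k\le r^2/2$, we get two classes with a common dominant colour, each of multiplicity $\geq\max(a_i,a_j)$ by a further pigeonhole on $q=r$ provided $\max(a_i,a_j)\le r$, which holds since $M\notin$... no, $M\in Q$, so $a_i$ could be $r$; but then $\sigma=M$ and $\sigma^\dagger$ is undefined, so in fact $\sigma\ne M$ and all parts are $\le r-1$). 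This pins down the gap: $H$ is $1$-$Q$- and $r^3$-$Q$-colourable (via CDMC, $r^2$ colours, extendable up to $r^3$? no --- CDMC uses exactly $r^2$; use instead that $R\in$... it isn't; so the upper endpoint is $r^2$, still $>$ the forbidden intermediate values) but not $k$-$Q$-colourable for, say, $k=2$, giving $1<2<r^2$ and hence a genuine gap. I would close by remarking that the complementary hypergraph $H(r,r,r^2\mid\Sigma=Q)$ works too by the single-class version of the same argument, paralleling Lemma~4.7.
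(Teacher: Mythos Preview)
Your proposal has a genuine gap: you target the wrong value of $k$. You try to show that $H$ is not $2$-$Q$-colourable, but this is false in general. Take $r=4$ and $Q=\{(4),(3,1),(2,1,1)\}$: here $M\in Q$, $R\notin Q$, and $Q$ is not reduction-closed since $(2,1,1)$ reduces to $(2,2)\notin Q$. Yet $H(16,4,4\mid\Sigma=Q)$ \emph{is} $2$-$Q$-colourable: colour every vertex $1$ except a single vertex $v$ coloured $2$; one checks that every edge of type $(4)$, $(3,1)$ or $(2,1,1)$ then has pattern $(4)$ or $(3,1)$, both in $Q$. So the spectrum does contain $2$, and your proposed gap $1<2<r^2$ is not there.

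The reason your merging argument breaks down is exactly the one you gloss over: to force the pattern $\sigma^\dagger$ (the reduction of $\sigma$ at parts $a_i,a_j$), it is not enough that the $a_i$- and $a_j$-parts share a colour; you also need the remaining $s(\sigma)-2$ parts to carry \emph{pairwise distinct} colours, distinct also from the merged colour. With only two colours available this is impossible once $s(\sigma)\ge 3$, so the pattern you produce need not be $\sigma^\dagger$ at all. The paper avoids this by working at the \emph{top} of the spectrum rather than the bottom: it shows $H$ is not $(r^2-1)$-$Q$-colourable. The key first step --- which has no analogue in your sketch --- is to prove that in any $(r^2-1)$-$Q$-colouring every class must be monochromatic, using $s(Q)\le r-1$ (from $R\notin Q$) together with the abundance of colours to find $s(Q)+1$ distinctly coloured vertices across classes whenever a class is not monochromatic. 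Once all $r^2$ classes are monochromatic and only $r^2-1$ colours are used, two classes share a colour; placing the $a_i$- and $a_j$-parts of $\sigma$ in those two classes and the remaining parts in classes of pairwise distinct colours now \emph{does} force $pat(E)=\sigma^\dagger\notin Q$. Your instinct to exploit the non-closure via a merge is right, but the environment that makes the merge visible is ``almost CDMC'' ($r^2-1$ colours), not ``almost monochromatic'' ($2$ colours).
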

\begin{proof}

Consider $H=H(r^2,r,r \mid \Sigma=Q)$.  Clearly $H$ is $1$-$Q$-colourable since $M \in Q$, and $H$ is $r^2$-$Q$-colourable using the CDMC.  We claim that $H$ is not $(r^2-1)$-$Q$-colourable.  Suppose on the contrary that $f$ is a proper $(r^2-1)$-$Q$-colouring of $H$.

We first show that each class must be monochromatic.  Assume without loss of generality that in class $V_1$, $1=f(v_{1,1}) \not = f(v_{1,2})=2$.  Since we are using $r^2-1$ colours and each class has $r$ vertices, there must be a vertex of another colour, say 3, not in $V_1$.  Let this vertex be $v_{2,1}$ in $V_2$.  Still $V_1$ and $V_2$ can have at most $2r$ colours, hence there is another vertex of colour say 4 not in $V_1 \cup V_2$.  Let this vertex be $v_{1,3} \in V_3$.  We can continue this process at least until class $V_r$, since $r-1$ classes can have at most $r(r-1) < r^2-1$ colours.  Hence we conclude that we can assume the vertices $v_{1,1},v_{1,2},v_{2,1},v_{3,1},\ldots,v_{r,1}$ are $r+1$ vertices of $r+1$ distinct colours.  Now let $\sigma \in \Sigma=Q$ be a partition such that $s(\sigma)=s(Q)$.  Since $R \not \in Q$, $s(Q) < r$ and hence $\Delta(\sigma) \geq 2$.  We choose an edge $E$ with $type(E)=\sigma$ such that the part $a_j \in \sigma$ for $j=1 \ldots s$ consists of the first $a_j$ vertices of class $V_j$.  Clearly the $pat(E)=\pi$ where $s(\pi) \geq s(\sigma)+1$, hence $\pi \not \in Q$.  Thus all classes must be monochromatic.

Since there are $r^2$ classes and we are using $r^2-1$ colours, there are two classes, say $V_1$ and $V_2$ which are monochromtic of the same colour, while all other classes are monochromatic with pairwise distinct colours. Since $Q$ is not reduction-closed, there is a parition $\pi=(a_1,\ldots,a_s) \in Q$ such that for some $1 \leq i,j \leq s$, the partition $\pi^*$ formed by replacing $a_i$ and $a_j$ by $a_i+a_j$ is not in $Q$.

Now consider the edge $E^*$ with $type(E)=\pi \in \Sigma=Q$ such that the parts $a_i$ and $a_j$ are chosen from classes $V_1$ and $V_2$, while the other parts are chosen from any of the remaining classes.  Clearly $pat(E)=\pi^* \not \in Q$, contradicting the assumption that $f$ was a proper $(r^2-1)$-$Q$-colouring.

\end{proof}

Combining these three lemmas, we have a proof of Theorem \ref{necessity}.

We now consider the case $Q= \{ ( 3,1) \}$. Clearly $M$ and $R$ are not in $Q$, and $Q$ is not robust.  By Theorem \ref{theorem20} there exists a hypergraph $H$ that has a gap in its $Q$-spectrum.
 
We will show that this is not a $\Sigma$-hypergraph, hence only a partial converse to Theorem \ref{theorem20} can holds for $\Sigma$-hypergraphs.  Yet we give a concrete example  of a small 4-uniform hypergraph that has a gap in its $Q$-spectrum.  

We have checked that
\begin{enumerate}
\item{Theorem \ref{theorem20}  holds for $\Sigma$-hypergraphs for $r = 3$  namely if $Q$  is not robust then there is $H(n,3,q|\Sigma)$ with a gap in its $Q$-spectrum.}
\item{Theorem \ref{theorem20} holds for $\Sigma$-hypergraphs for $r = 4$ unless $Q = \{(3,1)\}$,  namely if $Q$ is not robust and not $\{ ( 3,1)\}$ then there is $H(n,4,q|\Sigma)$ with a gap in its  $Q$-spectrum.}
\end{enumerate}

The next Proposition  covers the case $Q = \{ ( 3,1) \}$.

\begin{proposition}
Consider $H(n,4,q | \Sigma)$ where $\Sigma \subseteq P(4)$, and $Q=\{(3,1)\}$.  Then $H$ does not have a gap in its $Q$-spectrum for $n \geq s(\Sigma)$, $q \geq \Delta(\Sigma)$.
\end{proposition}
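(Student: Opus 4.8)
First I would understand exactly what a $Q$-colouring means when $Q = \{(3,1)\}$: every edge $E$ of $H = H(n,4,q\mid\Sigma)$ must have colour pattern $(3,1)$, i.e.\ exactly two colours appear on $E$, one on three vertices and one on a single vertex. In particular such a colouring exists (the CDMC gives one, since $(3,1)$ may or may not be the type of every edge --- actually I must be careful here: the CDMC only works if $(3,1) \in \Sigma$, so the first sub-case to dispose of is $\Sigma$ not containing $(3,1)$, but then $pat(E) = type(E) \neq (3,1)$ for the CDMC; however a more ad hoc colouring must be built). The cleanest route is to show directly that the $Q$-spectrum, whenever nonempty, is an \emph{interval}, by exhibiting for each valid $k$-$Q$-colouring a valid $(k-1)$-$Q$-colouring (down to the minimum) and a valid $(k+1)$-$Q$-colouring (up to the maximum) --- the same merge/split strategy used in the proof of Theorem~\ref{theorem20}, except that $Q = \{(3,1)\}$ is neither reduction- nor expansion-closed, so the moves are not automatic and one must use the special structure of $\Sigma$-hypergraphs.

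Next, the key structural observation: in any $(3,1)$-$Q$-colouring, consider the colour classes and how they meet the vertex classes $V_1,\dots,V_n$. Because $\Delta(\{(3,1)\}) = 3 < 4$, no edge can be monochromatic; because every pattern is exactly $(3,1)$, no edge can be rainbow or have pattern $(2,2)$ or $(2,1,1)$. I would analyse which partitions $\sigma \in \Sigma$ are actually "active" --- for $\sigma = (4)$, the class $V_i$ is a complete $4$-uniform hypergraph and a $(3,1)$-colouring of $K_4^{(4)}$ forces a very rigid structure on each $V_i$; for $\sigma = (3,1)$ and $\sigma = (2,2)$ and $\sigma = (2,1,1)$ and $\sigma = R$, crossing edges impose further constraints. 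The heart of the argument is to show that a $(3,1)$-$Q$-colouring essentially looks like "a few large monochromatic blocks plus scattered singletons", and that one can always merge a singleton colour into an adjacent block (decreasing $k$ by one) or split off a singleton from a block (increasing $k$ by one) while preserving every edge pattern equal to $(3,1)$. This works precisely because the $\Sigma$-hypergraph structure is so homogeneous: if an edge of type $\sigma$ was coloured $(3,1)$, moving one vertex between the "$3$" and the "$1$" rôles, or between colour classes that are themselves large monochromatic blocks, keeps the pattern at $(3,1)$.

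I expect the main obstacle to be the case analysis over $\Sigma \subseteq P(4)$: there are only a handful of partitions of $4$, namely $(4), (3,1), (2,2), (2,1,1), (1,1,1,1)$, but the constraints they impose on a $(3,1)$-colouring interact, and one must check that the merge and split moves remain valid for \emph{every} $\Sigma$ containing whichever of these partitions. In particular the subtle point is when $(2,2) \in \Sigma$ or $(2,1,1)\in\Sigma$: a crossing edge of type $(2,2)$ taking two vertices from a monochromatic block $V_i$ and two from $V_j$ already has pattern $(2,2) \notin Q$ unless $V_i$ and $V_j$ share a colour in a coordinated way, which severely limits when $H$ is $Q$-colourable at all --- so part of the work is showing that for many $\Sigma$ the $Q$-spectrum is empty (vacuously no gap), and for the remaining $\Sigma$ it is a genuine interval. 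Once the valid-$\Sigma$ cases are isolated, checking that the spectrum has no gap reduces to the merge/split lemma, which is routine given the homogeneity; I would state that lemma explicitly and then verify it survives each admissible $\Sigma$, invoking Theorem~\ref{omega}-style arguments (at most $r-1=3$ classes meet any colour-interaction pattern, at most $r-1$ vertices of a class share rôles) to keep the bookkeeping finite.
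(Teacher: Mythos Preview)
Your plan diverges from the paper's proof in two substantive ways, and the second of them is a genuine gap.

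\textbf{Comparison with the paper.} The paper does not attempt a merge/split lemma. Instead it computes $Spec_Q(H(n,4,q\mid\sigma))$ explicitly for each of the five singleton choices $\sigma\in P(4)$, showing in each case the spectrum is empty, a single point, or an interval. It then invokes the one-line observation
\[
Spec_Q\bigl(H(n,r,q\mid\Sigma_1\cup\Sigma_2)\bigr)\ \subseteq\ Spec_Q\bigl(H(n,r,q\mid\Sigma_1)\bigr)\cap Spec_Q\bigl(H(n,r,q\mid\Sigma_2)\bigr),
\]
which follows because $E(H(\cdot\mid\Sigma_i))\subseteq E(H(\cdot\mid\Sigma))$, and checks that every multi-partition $\Sigma$ yields a spectrum that is empty or a single value. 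This reduces 31 subsets to 5 base cases plus an intersection check; your plan to verify merge/split for ``every $\Sigma$ containing whichever of these partitions'' is doing the same case analysis without this shortcut.

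\textbf{The gap.} Your proposed engine---a merge/split lemma turning any valid $k$-colouring into valid $(k\pm1)$-colourings---is not established, and in the central case $\Sigma=\{(3,1)\}$ it does no work: the paper shows $Spec_Q(H)=\{n\}$ exactly, so the content is that neither merging nor splitting is \emph{possible}, which is precisely the detailed forbidden-pattern argument in the paper's case~1, not a routine consequence of homogeneity. Your structural picture (``a few large monochromatic blocks plus scattered singletons'') is also inaccurate across the cases: for $\sigma=(4)$ with $q=4$ every class is a $3{+}1$ split, for $\sigma=R$ with $n=4$ three classes are monochromatic and one is arbitrary, and for $\sigma=(3,1)$ the colouring is forced to be CDMC. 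These structures are different enough that a single merge/split lemma covering them all would itself require the per-$\sigma$ analysis you are trying to avoid. In short, the plan reduces to the paper's case-by-case computation once you try to make the lemma precise; what is missing is both that computation and the intersection observation that makes the multi-$\sigma$ cases tractable.
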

\begin{proof}

We first consider  the case where $\Sigma$ consists of a single partition of  four  (there are 5 such partitions).  The conditions $n \geq s(\Sigma)$, $q \geq \Delta(\Sigma)$ are included so that all edge types in $\Sigma$ can be realised.
\begin{enumerate}
\item{Consider $H(n,4,q | \sigma=(3,1))$ where $n \geq 2$, $q \geq 3$.  Then $H$ is $n$-$Q$-colourable using the CDMC.  In general, no class can contain 3 colours since in that case we can choose three vertices of distinct colours to give an edge containing three colours.  So we may assume that each class contains at most two colours.

Consider a $k$-colouring, where $k < n$.  Then at least one colour, say the colour 1, appears in two distinct classes, say $V_1$ and $V_2$, without loss of generality.   If in one of these classes, say $V_1$, there are three vertices of colour 1, then we can choose a monochromatic edge from $V_1$ and $V_2$.  If in say $V_1$  there are two vertices of the same colour but not colour 1, then we can choose these two vertices and the vertex of colour 1 from $V_1$, plus the vertex of colour 1 in $V_2$, to give an edge with $pat(E)=(2,2)$.  So there can only be one vertex of another colour, say 2, in $ V_1$.  But then all vertices in $V_2$ must be of colour 1, because otherwise, if in $V_2$ there is a vertex of colour 2, we can choose an edge which includes two vertices of colour 1 and two vertices of colour 2, and if there is a vertex of another colour other than 1 and 2 in $V_2$, then we can choose an edge to include three colours.  But if all the vertices in $V_2$ are of colour 1, and there is a vertex in $V_1$ of colour 1, we can choose a monochromatic edge.  Hence it is not possible to colour $H$ with less than $n$ colours.  

Now consider  a $k$-colouring, where $k > n \geq 2$. Then at least one class contains two vertices of distinct colour, say in $V_1$ there is a vertex of colour 1 and a vertex of colour 2.  But since there are at least three colours, there is a vertex of another colour, say colour 3 in some other class in $H$.   But then we can choose an edge to include the two vertices of colours 1 and 2 in $V_1$, and this vertex of colour 3, giving an edge which includes 3 colours.  Hence a $Q$-colouring with more than $n$ colours is not possible.  Therefore $Spec_{Q}(H)=\{n\}$.}
\item{Consider $H(n,4,q | \sigma=(2,2))$ where $n \geq 2$, $q \geq 2$.  If $n=q=2$ we can colour one class monochromatically, say with colour 1, and in the other class, we colour a vertex with colour 1 and one with colour 2, and this is the only valid $Q$-colouring of this graph, and hence its $Q$-spectrum is not broken.  If $q>2$, then if $V_1$ is monochromatic of say colour 1, then the vertices in $V_2$ must be coloured in such a way that we always choose a vertex of colour 1 and one of another colour.  But this is impossible if $q>3$.  

Hence consider $q =2$ and $n>2$.  If we colour the first 2 classes as described above, then if in the third class, both vertices are of colour 1, we can choose a monochromatic edge from classes $V_1$ and $V_3$, while if both vertices are coloured 2, we can choose and edge $E$ from classes $V_1$ and $V_3$ such that $pat(E)=(2,2)$.  If both colours 1 and 2 appear in $V_3$, then we can choose an edge with colour pattern $(2,2)$ from classes $V_2$ and $V_3$.  Finally, if a third colour appears in $V_3$, then we can choose an edge which includes three colours from $V_2$ and $V_3$.  So again $H$ is not colourable, and hence its $Q$-spectrum is empty.}

\item{Consider $H(n,4,q | \sigma=(2,1,1))$ where $n \geq 3$, $q \geq 2$.  Consider two vertices of the same colour, say colour 1, in a class, say $V_1$.  Hence without loss of generality, we may  assume in $V_2$ there is a vertex of colour 1 and in $V_3$ a vertex of colour 2.   This forces $V_2$ to be monochromatic of colour 1, otherwise the two vertices from $V_1$ and the vertex from $V_3$ together with a vertex from $V_2$ that is not of colour 1 will give either an edge $E$ with $pat(E )= (2,2)$ or $pat(E ) = (2,1,1)$.
 
If  $V_3$ is not monochromatic of colour 2 then either there is a vertex of colour 1 in $V_3$  and we have a monochromatic edge choosing the two vertices from $V_1$, any vertex from $V_2$ and this vertex from $V_3$, or there is an edge that includes three colours  choosing the two vertices from $V_3$ and any vertex of $V_2$ and a vertex of colour 1 from $V_1$.  Hence $V_3$ must be  monochromatic of colour 2 and choosing any two vertices from $V_3$, one from $V_2$ and a vertex of colour 1 from $V_1$, we get an edge with color pattern $(2,2)$.

So assume that no two vertices in $V_1$ have the same colour.  Then if we choose two vertices from this class, say one of colour 1 and one of colour 2 without loss of generality, then the remaining classes must be monochromatic, either all of colour 1 or all of colour 2.  Let us say they are all of colour 1.  But then we can choose a monochromatic edge by choosing a vertex of colour 1 from $V_1$, a vertex from $V_2$ and two vertices from $V_3$, all of colour 1.  Hence $H$ is not $Q$-colourable.}

\item{Consider $H(n,4,q | \sigma=(1,1,1,1))$ where $n \geq 4$, $q \geq 1$.  If $n=4$ and $q \geq 1$, then without loss of generality we can colour $V_1$, $V_2$ and $V_3$ monochromatically using colour 1, while the vertices in $V_4$ can be coloured using colours different from 1, that is using one colour, say colour 2, up to $q$ colours.  Hence $H(4,4,1 |\sigma=(1,1,1,1))$ has $Q$-spectrum $\{2,3,\ldots,q+1\}$.  Now consider a $k$-colouring of $H$ for $k > q+1$.  Let us choose an edge $E=\{v_1,v_2,v_3,v_4\}$ such that $v_i$ is chosen from $V_i$ for $i=1 \ldots 4$, so that $pat(E)=(3,1)$, and without loss of generality, we assume that $v_1$, $v_2$ and $v_3$ ave the same colour, say colour 1, and $v_4$ is of a different colour.  Clearly, all vertices in $V_1$,$V_2$ and $V_3$ must be of colour 1, otherwise we can choose an edge with colour pattern $(2,2)$ or $(2,1,1)$ which would make the colouring invalid.  Hence only class $V_4$ can contain vertices of different colours, and since there are $q$ vertices in a class, the maximum number of colours which can be used in a valid $Q$-colouring is $q+1$.

If $n>4$, then we must choose a vertex from each class such that three have the same colour, and one has different colour and if $q>1$, this is clearly not possible.}
\item{Consider $H(n,4,q | \sigma=(4))$ where $n \geq 1$, $q \geq 4$.  Then we must choose four vertices from one class such that three are of the same colour, and one is of a different colour.  If $q>4$, this is not possible.  For $q=4$, we can colour each class to include 3 vertices of one colour, and one vertex of a different colour.  This means that the $Q$-spectrum of such a graph is $\{2,3,\ldots,2n\}$, and it contains no gaps.}
\end{enumerate}

Hence if $n \geq s(\sigma)$, $q \geq \Delta(\sigma)$, and  $H(n,4,q | \sigma)$ where $\sigma \in P(4)$, there is no gap in the $Q$-spectrum of $H$, for $Q=\{(3,1)\}$.

Having done the above five cases we use the following simple observation.   Let $\Sigma= \Sigma_1 \cup \Sigma_2$  where $\Sigma_1 \cap \Sigma_2  = \emptyset$  then \[Spec_Q(H(n,r,q|\Sigma)) \subseteq Spec_Q(H(n,r,q|\Sigma_1)) \cap Spec_Q(H(n,r,q|\Sigma_2)),\] and hence one can easily verify that for any $\Sigma$ consisting of at least two partitions of 4, $Spec_Q( H(n,r,q|\Sigma))$ is either empty or contains exactly one value, hence  it is not broken.
 
\end{proof}

Although no $\Sigma$-hypergaph has a gap in its $(3,1)$-spectrum, we now give an example of a 4-uniform hypergraph with a broken $(3,1)$-spectrum.  

Consider the $4$-uniform hypergraph $H$ having sixteen vertices.  Suppose they are grouped into cells each containing two vertices, and the cells are arranged in a $4 \times 2$ grid. The cell in row $i$ column $j$ will be denoted by $C_{ij}$, $1 \leq i \leq 4$ and $1 \leq j \leq 2$.  The vertices in cells $C_{i1}$ and $C_{i2}$ are said to be in the $i^{th}$ row and the vertices in cells $C_{1j}$ to $C_{4j}$  are said to be in column $j$.  All 4-subsets $K$ of four vertices form an edge if the sizes of the non-empty intersections of $K$ with the rows, and also the sizes of the non-empty intersections of $K$ with the columns form the partition $(3,1)$.  Figure \ref{hyp31} shows an example with two edges in this hypergraph.

\begin{figure}[h!]
\centering
\includegraphics{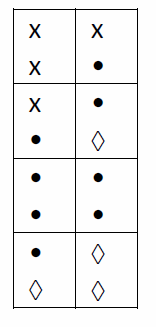}
\caption{Two edges are shown with the vertices in one edge marked with $\lozenge$ while in the other edge the vertices are marked as $\times$ } \label{hyp31}
\end{figure}

We show that $H$ is $2$-$Q$-colourable, $4$-$Q$-colourable for $Q=\{(3,1)\}$, but not $3$-colourable.  Consider a $2$-colouring using colours 1 and 2 such that   the vertices in colourn 1 receive colour 1, and the vertices in column 2 receive colour 2.  It is easy to see that an edge $E \in E(H)$ is such that $pat(E)=(3,1)$.  Consider a 4-colouring of $H$ such that the vertices in row $i$ receive colour $i$ for $i=1 \ldots 4$.  Again it is easy to see that $pat(E)=(3,1)$, $\forall E \in E(H)$.

We now show that $H$ is not $3$-$Q$-colourable.  Suppose $H$ is coloured using three colours 1,2 and 3.  Suppose first that the vertices in a cell receive different  colours.  We may assume without loss of generality that the vertices in $C_{11}$ receive colours 1 and 2.  There must be a vertex $v$ which receives colour 3.  If $v$ is in column 1, then we can choose an edge which contains the three vertices of different colours, and hence the coluring is not valid.  Therefore all vertices in column 1 must have colour 1 or 2.

Suppose $v$ appears in column 2.  If $v$ is in row 1, we can choose an edge which includes the vertices in row 1 coloured 1,2 and 3, which is again  invalid.  Therefore we can assume $v$ is not in row 1, and we assume without loss of genreality that $v$ is in $C_{22}$.  SO we can choose an edge $K$ as follows: $K$ contains $v$ and the two vertices in $C_{21}$.  If these two vertices have colours 1 and 2 (recall that they cannot have colour 3), then already the colouring is invalid.  If they are both of the same colour, say colour 1, we can choose the final vertex from $C_{11}$ with column 2, (and similarly if both have colour 2, we can choose a vertex of colour 1 from $C_{11}$), again showing that the colouring is not valid.

So we may conclude that the vertices in one cell must have the same colour.  Without loss of generality, suppose the vertices in $C_{11}$ receive colour 1.  If there is another vertex of colour 1 in the first coloumn, say in row 2, then the vertices in $C_{12}$ must receive another colour, say colour 2, otherwise we can choose a monochromatic edge.  But there must be a vertex $v$ coloured 3.  If $v$ is in the first coloumn, then letting edge $K$ contain $v$, the two vertices in $C_{11}$ and a vertex coloured 2 from $C_{12}$ gives an edge with three colours.  Similarly, if $v$ is in column 2 we have the same situation.  Hence colour 1 cannot appear on the other cells in column 1.  By the same argument, no colour can appear in two cells in the same coloumn.  But then for cells $C_{21}$, $C_{31}$ and $C_{41}$ only colours 2 and 3 are available(recall that the vertices in a cell must receive the same colour), and therefore the vertices in at least two cells must all receive the same colour, giving us the same contradiction.

Hence there is no valid $3$-$Q$-colouring of $H$, and $Spec_Q(H)$ contains a gap.

\section{Conclusion}

In this paper we have tried to unify and clarify a number of hypergraph colouring problems using the language of $Q$-colourings and $\Sigma$-hypergraphs.  We have defined tight colourings, which is a generalisation of unique colourability, and we have shown that for all $Q$ not containing the partitions $M$ and $R$ there exist $\Sigma$-hypergraphs which are tightly Q-colourable.  We have also shown that finding the clique number of $\Sigma$-hypergraphs, which is, in general,  an $NP$-complete problem, can be done in $O(1)$ time and that there exist $\Sigma$-hypergraphs with arbitrarily high chromatic, and hence $Q$-chromatic, number but bounded clique number.

Finally we have considered the result in \cite{2010pattern}, which characterises those $Q$ for which there exist hypergraphs with a gap in their $Q$-spectrum.  In \cite{2010pattern}, this characterisation is given for oriented hypergraphs and the result for hypergraphs is given as a corollary.  Here we give a short direct proof of the necessity of their condition for $\Sigma$-hypergraphs using the language of $Q$-colourings.  We also prove a partial converse of their result for $\Sigma$-hypergraphs and show that, for $Q=\{(3,1)\}$, which is known to give hypergraphs with gaps, there is no $\Sigma$-hypergraph with a gap in its $Q$-spectrum.  However, we present a hypergraph with a gap in its $Q$-spectrum for $Q=\{(3,1)\}$.  This hypergraph can be viewed as a further generalisation of $\sigma$-hypergraphs, which we intend to study further.

\bibliographystyle{plain}
\bibliography{qcol}

\end{document}